\let\mathcal\mathscr
\renewcommand{\leq}{\leqslant}
\renewcommand{\geq}{\geqslant}
\renewcommand{\d}{\mathrm{d}}
\newtheorem{theorem}{Theorem}[section]
\newtheorem{corollary}[theorem]{Corollary}
\newtheorem{lemma}[theorem]{Lemma}
\newtheorem{proposition}[theorem]{Proposition}
\theoremstyle{definition}
\newtheorem{remark}[theorem]{Remark}
\newtheorem{conjecture}[theorem]{Conjecture}
\numberwithin{equation}{section}
\newcommand\Z{\mathbb{Z}}
\newcommand\ZZ{\mathbb{Z}}
\newcommand\N{\mathbb{N}}
\newcommand\NN{\mathbb{N}}
\newcommand\CC{\mathbb{C}}
\newcommand\QQ{\mathbb{Q}}
\newcommand{\f}{\mathbf{f}}
\DeclareMathOperator{\rad}{rad}
\DeclareMathOperator{\n}{N}
\title{Square-free values of polynomials on average}
\author{Pascal Jelinek}
\address{Universität Wien\\
		Oskar-Morgensternplatz 1\\
		1100 Wien\\
		Austria}
\email{pascal.jelinek@univie.ac.at}
\subjclass[2010]{11N32}
\begin{document}
    \begin{abstract}
        The number of square-free integers in $x$ consecutive values of any polynomial $f$ is conjectured to be $c_fx$, where the constant $c_f$ depends only on the polynomial $f$. This has been proven for degrees less or equal to 3. Granville was able to show conditionally on the $abc$-conjecture that this conjecture is true for polynomials of arbitrarily large degrees. In 2013 Shparlinski proved that this conjecture holds on average over all polynomials of a fixed naive height, which was improved by Browning and Shparlinski in 2023.
        In this paper, we improve the dependence between $x$ and the height of the polynomial. We achieve this via adapting a method introduced in a 2022 paper by Browning, Sofos, and Teräväinen on the Bateman-Horn conjecture, the polynomial Chowla conjecture, and the Hasse principle on average.
    \end{abstract}
    
\maketitle

\thispagestyle{empty}
\setcounter{tocdepth}{1}
\tableofcontents
\section{Introduction}
The number of square-free values of a given polynomial has been studied for more than a century, with the first results dating back at least to Landau \cite{landau_handbuch_1911} in 1911. He showed that infinitely many square-free values are attained by any linear univariate polynomial with square-free content.\\
More generally, we have the following heuristic: Given a polynomial $f$, let $c_p$ be the number of solutions of $f$ when viewed over $\ZZ/p^2\ZZ$, i.e., $c_p=\#\{(a_1,\dots,a_n)\in(\ZZ\cap[1,p^2])^n:f(a_1,\dots,a_n)\equiv 0 \bmod p^2\}$. If we now assume that the primes are independent of each other, we get the following conjecture:
\begin{conjecture}\label{Conj_General}
	Let $f\in\ZZ[x_1,\dots,x_n]$. We define $S:=\{(a_1,\dots,a_n)\in\ZZ^n:f(a_1,\dots,a_n) \text{ is square-free}\}$. Then for any $\text{Box}=\text{Box}(N_1,\dots,N_n)=\{(a_1,\dots,a_n)\in\ZZ^n:|a_i|<N_i \text{ for all }i\}$ we have that
	\begin{equation}
		\lim_{N_1, \dots,N_n\to \infty} \frac{\#(S\cap\text{Box})}{\#\text{Box}}=\prod_{p}\left(1-\frac{c_p}{p^{2n}}\right).
	\end{equation}
\end{conjecture}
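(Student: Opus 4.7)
The starting point is the Möbius identity $\mu^2(m) = \sum_{d^2 \mid m} \mu(d)$. Applying it with $m = f(\x)$ and exchanging the order of summation transforms the count of square-free values into
\[
\#(S \cap \mathrm{Box}) = \sum_{d \geq 1} \mu(d) \, N_f(d; \mathrm{Box}), \qquad N_f(d; \mathrm{Box}) := \#\{\x \in \mathrm{Box} : d^2 \mid f(\x)\}.
\]
The plan is to split the $d$-sum at a threshold $D$, to be chosen as a small positive power of $\min_i N_i$, then extract the conjectural main term from the range $d \leq D$ and bound the tail $d > D$ as an error.

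For $d \leq D$, the Chinese Remainder Theorem makes $c_d$ (defined as the number of solutions of $f \equiv 0 \pmod{d^2}$) multiplicative in $d$, so that $N_f(d; \mathrm{Box})$ is approximated by $(c_d/d^{2n}) \cdot \#\mathrm{Box}$ up to a boundary term of one lower dimension. Summing against $\mu(d)$ over $d \leq D$ and extending the product to all primes — legitimate because $c_p/p^{2n} = O(p^{-2})$ away from bad primes, by Lang--Weil once $f$ has no fixed square divisor — recovers the conjectured main term $\#\mathrm{Box} \cdot \prod_p(1 - c_p/p^{2n})$, with a total error that is a genuine power saving.

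The hard part is the tail $\sum_{d > D} \mu(d) \, N_f(d; \mathrm{Box})$. For $d$ up to about $(\prod_i N_i)^{1/(2n)}$, a Lang--Weil or Ekedahl-type bound on $N_f(d; \mathrm{Box})$ still yields a negligible contribution after summing in $d$. The genuine difficulty is the intermediate range $D < d \ll \max_{\x \in \mathrm{Box}}|f(\x)|^{1/2}$, in which a single large $d$ can \emph{a priori} account for many lattice points simultaneously and no elementary argument rules out accidental concentrations. This is the classical obstruction: for $n = 1$ and $\deg f \geq 4$ it is exactly what Granville handles by invoking the $abc$-conjecture, while for small $\deg f$ one substitutes Hooley-type estimates on auxiliary curves.

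The main obstacle, therefore, is unconditionally bounding this intermediate range; doing so in full generality is equivalent in strength to the $abc$-conjecture, and is precisely why Conjecture \ref{Conj_General} remains open beyond $\deg f \leq 3$. For $n \geq 2$ one could try to fibre $f$ over one coordinate and invoke Poonen's geometric sieve, which succeeds when $n$ is large enough relative to $\deg f$; outside that regime, the only currently viable route is the averaging strategy of Browning--Sofos--Teräväinen, in which cancellation over the coefficients of $f$ replaces an unconditional tail bound.
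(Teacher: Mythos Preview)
The statement you are addressing is not a theorem in the paper but \emph{Conjecture~\ref{Conj_General}}: the paper does not prove it, and indeed devotes the surrounding discussion to explaining its status (Poonen's reduction to the univariate case, Granville's conditional proof under $abc$, and unconditional results only for $\deg f\le 3$). So there is no ``paper's own proof'' to compare against.

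Your proposal is not a proof either, and you are forthright about this: you lay out the standard decomposition via $\mu^2(m)=\sum_{d^2\mid m}\mu(d)$, treat the small-$d$ range correctly, and then correctly identify the intermediate range $D<d\ll \max|f(\x)|^{1/2}$ as the genuine obstruction, noting that handling it unconditionally is essentially equivalent to $abc$. That diagnosis is accurate and matches the paper's own discussion in the introduction. In short, you have written a sound \emph{explanation of why the conjecture is open}, not a proof; since the paper also treats it as open, there is no discrepancy to flag. If anything, one could tighten the claim that bounding the tail ``is equivalent in strength to the $abc$-conjecture'' --- Granville shows $abc$ suffices, but the converse implication is not established --- so ``appears to require input of $abc$-type strength'' would be safer phrasing.
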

A consequence of this conjecture is that the polynomial $f$ attains infinitely many square-free values if it is not the square of another polynomial $g$ and if it has non-zero values mod $p^2$ for each prime number $p$.
\begin{remark}
	In the case where $f$ is a univariate polynomial, we will often use the term $\rho_f(p^2)$ instead of $c_p$.
\end{remark}
In 2002, Poonen \cite{Poonen2002SquarefreeVO} showed that this problem regarding a general polynomial $f\in \ZZ[x_1,\dots,x_n]$ can be reduced to the analogous problem concerning univariate polynomials. He achieved this via considering $f$ as a polynomial in $\ZZ[x_1\dots,x_{n-1}][x_n]$. He then combined this reduction with prior work by Granville \cite{Granville1998ABCAU}, who showed that Conjecture \ref{Conj_General} holds for univariate polynomials under the $abc$-conjecture. Poonen emphasises that this method only provides a proof to a slightly weaker form of the conjecture (he needed to impose some relations between the $N_i$'s) since the proof of Granville under the $abc$-conjecture is not homogeneous in the coefficients. However, the relations between the $N_i$'s could be dropped with a new proof of the univariate case. (cf. remark after Lemma 6.2 in \cite{Poonen2002SquarefreeVO})\\
If we do not assume the $abc$-conjecture, then the conjecture is wide open, both regarding multivariate polynomials and regarding univariate polynomials. In both cases, it is solved only for specific families of polynomials, of which we now want to highlight a few.
\subsection{Results regarding multivariate polynomials}
If the degree is low compared to the number of variables, one can use the circle method to show that the conjectured asymptotical behaviour is true. Remarkably, work by Destagnol and Sofos \cite{DESTAGNOL2019102794} shows that in this case, a third of the number of variables compared to the Birch setting already suffices. Explicitly, this means that if $n-\sigma_f>1/3(d-1)2^d$, the conjecture holds, where $n$ is the number of variables, $d$ is the degree of the polynomial $f$ and $\sigma_f$ is the degree of the singular locus of $f=0$.\\
Another family of polynomials for which it has been shown that the heuristic holds is due to work of Bhargava \cite{bhargava2014geometric}. If $f$ is invariant under the action of a suitably large algebraic group, then the heuristic correctly predicts the asymptotic behaviour of square-free numbers. In particular, the polynomials related to the discriminants of degree 3, degree 4, and degree 5 extensions of $\QQ$ satisfy this criterion.\\
These results and their methods of proof were also used to prove lower bounds on the average size of Selmer groups of families of elliptic curves and also to prove that the Hasse principle fails for a positive proportion of plane cubic curves over $\QQ$. (see for example \cite{bhargava2014positive}, \cite{bhargava2022average}, \cite{bhargava2013binary}, \cite{bhargava2013ternary}, \cite{bhargava2013average4} and \cite{bhargava2013average5})
\subsection{Results regarding univariate polynomials}
In the case of a univariate polynomial $f\in\ZZ[t]$, we will state the conjecture in the following way:
\begin{conjecture}
	Let $f\in\ZZ[t]$. Then:
	\begin{equation}
		\sum_{1\leq n\leq x}\mu^2(f(n))\sim\prod_{p}\left(1-\frac{\rho_f(p^2)}{p^2}\right)x,
	\end{equation}
	where $\rho_f\left(m\right)$ is defined to be the number of solutions to $f\left(n\right)\equiv 0 \bmod m$, where $1\leq n\leq m$.
\end{conjecture}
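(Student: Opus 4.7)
The plan is to follow the classical Möbius--Erd\H{o}s--Hooley--Granville strategy. Writing $\mu^2(m)=\sum_{d^2\mid m}\mu(d)$ and swapping the order of summation yields
\begin{equation}
	\sum_{1\le n\le x}\mu^2(f(n)) \;=\; \sum_{d\ge 1}\mu(d)\,N_d(x),\qquad N_d(x):=\#\{1\le n\le x:d^2\mid f(n)\}.
\end{equation}
Set $k=\deg f$. One chooses parameters $1\le y\le z\le x^{k/2}$ and splits the $d$-sum into a \emph{small} range $d\le y$, a \emph{medium} range $y<d\le z$, and a \emph{large} range $d>z$. The goal is to extract the conjectured main term from the small range and to show that the medium and large ranges contribute only $o(x)$.

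For the small range I would use that $d\mapsto\rho_f(d^2)$ is multiplicative by the Chinese Remainder Theorem, together with the lattice-point count $N_d(x)=\rho_f(d^2)x/d^2+O(\rho_f(d^2))$ valid whenever $d\le y$. Summing over squarefree $d\le y$ with Möbius weights and completing the resulting Euler product produces $\prod_p(1-\rho_f(p^2)/p^2)\,x$ up to an error controlled by the tail $x\sum_{d>y}\rho_f(d^2)/d^2$; standard divisor-type estimates $\rho_f(p^2)\ll_f p$ (away from the finite set of primes where $f$ has a double root mod $p^2$) show this tail is $o(x)$ as $y\to\infty$. The large range is also straightforward: if $d^2\mid f(n)$ with $n\le x$ then $d^2\le |f(n)|\le C_f x^k$, so in fact $d\le C_f' x^{k/2}$; moreover each such $d$ near this upper cut-off corresponds to $O_f(1)$ integers $n$, so choosing $z$ slightly below $x^{k/2}$ makes $\sum_{d>z}N_d(x)=o(x)$.

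The whole difficulty concentrates in the medium range $y<d\le z$, and this is the \textbf{main obstacle}. Here one must show $\sum_{y<d\le z}N_d(x)=o(x)$, i.e.\ that not too many values $f(n)$ with $n\le x$ are divisible by the square of a ``mid-sized'' integer. For $k=2$ this is elementary (quadratic residues), for $k=3$ it was achieved unconditionally by Hooley \cite{Hooley1967OnTP} via a delicate exponential-sum argument, and for $k\ge 4$ it is precisely the content of the open problem: Granville \cite{Granville1998ABCAU} resolved it assuming the $abc$-conjecture, by showing that $d^2\mid f(n)$ with $d$ large forces $f(n)$ to have an unusually smooth part, which the $abc$-conjecture rules out on average. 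A reasonable attack strategy would therefore be: either (i) import Granville's $abc$-based argument and obtain the conjecture conditionally for arbitrary $k$, or (ii) attempt to extend Hooley's repulsion-between-roots method (see Helfgott \cite{Helfgott2007PowerfreeVR} and Reuss \cite{Reuss2013PowerfreeVO} for partial progress) to get an unconditional saving in the medium range for $k\ge 4$.

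Given the present state of the art, I expect that an honest attempt would produce the full conjecture only conditionally (on $abc$) and that any unconditional proof beyond $k=3$ would constitute a major breakthrough. Accordingly, the realistic content of such a proof proposal is the complete treatment of the small and large ranges together with a precise identification of exactly what input (such as $abc$) is needed to close the medium range.
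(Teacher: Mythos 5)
The statement you were asked to prove is presented in the paper as a \emph{conjecture}, and the paper contains no proof of it; the surrounding discussion explicitly records that it is open unconditionally for every univariate polynomial of degree greater than $3$. Your write-up is therefore correctly calibrated rather than deficient: the decomposition $\mu^2(m)=\sum_{d^2\mid m}\mu(d)$, the extraction of the Euler product $\prod_p\left(1-\rho_f(p^2)/p^2\right)$ from the range $d\le y$ via multiplicativity of $\rho_f$ and the count $N_d(x)=\rho_f(d^2)x/d^2+O(\rho_f(d^2))$, and the trivial treatment of the large range are standard and sound, and your identification of the medium range $y<d\le z$ as the sole genuine obstruction --- elementary for degree $2$, resolved by Hooley for degree $3$, and otherwise known only under the $abc$-conjecture by Granville --- matches the paper's own account in the introduction exactly. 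So the only ``gap'' is the one you have already, correctly, flagged as an open problem; claiming to close it unconditionally would have been an error, and you did not.

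It is worth noting how the paper routes around this obstruction, since it is genuinely different from either of your proposed attacks. Rather than proving the conjecture for a fixed $f$, the paper proves it for \emph{almost all} $f$ in a two-parameter family $\mathcal{F}_{g,S}(H)$ by bounding a second moment of the discrepancy over the family. Concretely, it compares $\mu^2$ with the truncated approximant $k_D(n)=\sum_{d^2\mid n,\, d\le D}\mu(d)$ --- which is precisely your small-range contribution --- and then controls the difference $\mu^2-k_D$ not by any pointwise input on an individual polynomial, but by an equidistribution statement for square-free numbers in short arithmetic progressions (Proposition \ref{prop-sqfree}) fed into the second-moment machinery of Corollary \ref{cor_general}. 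In effect, the averaging over the family absorbs the medium and large ranges that block the pointwise conjecture. This buys unconditional results for arbitrary degree, at the price of only holding outside an exceptional set of polynomials and only in a restricted range of $x$ relative to $H$.
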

There have been many different results related to this conjecture, and we want to highlight a few of them:\\
Ricci \cite{Ricci1933RicercheAS} was able to show that the asymptotics holds for polynomials of degree $\leq 2$. If $f$ has degree 1, then an explicit error term and an explicit dependence between $x$ and the height of the polynomial were first stated by Prachar \cite{Prachar1958berDK}. The error term has subsequently been improved by Hooley \cite{hooley_1975} and the dependence between $x$ and the height of the polynomial has been improved by Nunes \cite{Nunes_2017}.\\
If $f$ has degree 3, the asymptotics was first proven by Erd\H{o}s \cite{Erdös_Cube}. Later Hooley \cite{Hooley1967OnTP} provided the first explicit log-saving error term, which Reuss \cite{Reuss2013PowerfreeVO} improved to a power-saving error term in 2013.\\
Erd\H{o}s also conjectured that the inputs can be restricted to the prime numbers, in which case it has been shown by Helfgott \cite{Helfgott2007PowerfreeVR} to hold up to degree 3.\\
It appears that for univariate polynomials of degree $>3$, there is no polynomial known that attains infinitely many square-free values. \\
\subsection{Averaging and announcement of new results}
In the last few years, there has been much progress on the average behaviour of arithmetic functions over polynomial values, among others behaviour related to the van Mangold function $\Lambda$, Liouvilles function $\lambda$ and the $r$-function.\\
Regarding square-free numbers, we want to study the following problem:\\
Consider a polynomial $g=c_0t^d+\dots+c_d\in \ZZ[t]$ of height at most $H$ and degree at most $d$. Let $S\subset\{0,\dots,d\}$. We associate each polynomial to its coefficient vector and define $\mathcal{F}_{g,S}(H)$ to be roughly the set of coefficient vectors whose entries agree with or are close to the entries of $g$, depending on whether the index of the coefficient is in $S$ or not. More precisely, $\mathcal{F}_{g,S}(H)$ is defined to be
$$
\left\{(a_0,\dots,a_d) \in (\ZZ)^{d+1}: 
\begin{array}{ll}
	\text{$a_i=c_i$}& \text{if $i\notin S$}\\
	\text{$|a_i-c_i|\leq H$}& \text{if $i\in S$}\\
	\gcd(a_0,\dots,a_d)=1&
\end{array}
\right\}.
$$
We want to show that as $H$ tends to infinity, almost all $f\in\mathcal{F}_{g,S}(H)$ satisfy the conjecture. This is achieved via proving that
\begin{equation}\label{method}
	\frac{1}{\#\mathcal{F}_{g,S}(H)}\sum_{f\mathcal{F}_{g,S}(H)}\left|\sum_{1\leq n\leq x}\mu^2(f(n))-c_fx\right|^k\ll x^{1-\delta}
\end{equation}
for some exponent $k$ and $c_f=\prod\limits_{p}\left(1-\frac{\rho_f\left(p^2\right)}{p^2}\right)$.\\
All cited results below were derived using $k=1$.\\
Shparlinski \cite{shparlinski_average_2013} showed that given $g\equiv 0$ and $S=\{0,\dots,d\}$, the conjecture holds for almost all polynomials under the condition that $$x^{d-1+\varepsilon}\leq H\leq x^A,$$ for some constant $A$. In recent work, Browning and Shparlinski \cite{browning2023squarefree} used the geometry of numbers and the determinant method to improve this result to $$x^{d-3+\varepsilon}\leq H\leq x^A.$$
In the same paper, Browning and Shparlinski showed that if $g=c_0x^{d}+\dots+c_{d-1}x^{1}$ and only the constant coefficient is varied, i.e. $S=\{d\}$, then the dependence between $x$ and $H$ is
$$x^{(d-1)/2+\eta(d)+\varepsilon},$$ where $$\eta(d)=\begin{cases}
	2^{-(d+1)} & \text{if } 2\leq d\leq 5\\
	\frac{1}{d(d-1)} & \text{if } d\geq 6.
\end{cases}$$
This was achieved via adapting some works on the Vinogradov mean value theorem. (see \cite{browning2023squarefree} and the references therein)\\
In this paper, we approach this problem by considering inequality \eqref{method} for k=2. We adapt 
methods developed in recent work by Browning, Sofos and Teräväinen \cite{browning2022batemanhorn} to prove the following theorem:
\begin{theorem}\label{thm_aim}
	Let $A, d>1$, $1/5>\varepsilon>0$ be fixed, $d\geq\ell>k\geq0$. Then there is a constant $H_0(A,d)$ such that for all $H>H_0(A,d)$ the following statement holds: Let $g$ be any polynomial of degree $\leq d$ with $g(0)\neq0$ and let $S=\{\ell,k\}$. Let $x$ be such that
	\begin{enumerate}
		\item $x^{2\ell-d}\leq H$;
		\item $x^{d+\ell-2k+14/5+\varepsilon}\leq H\leq x^A$.
	\end{enumerate}
	Then there exists some $\delta>0$ such that for all but 
	$\ll H^{2-\delta}$ many
	degree $d$ polynomials $f$ in $\mathcal{F}_{g,S}(H)$, where the implied constant only depends on fixed parameters, we have
	\begin{equation}
		\left|\frac{1}{x}\sum_{1\leq n\leq x} \mu^2(f(n)) - c_fx\right|\leq x^{-\delta}.
	\end{equation}
\end{theorem}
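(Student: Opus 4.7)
My plan is to execute the second-moment strategy outlined in \eqref{method} with $k=2$. By Chebyshev's inequality, it suffices to establish the variance bound
\[
V := \sum_{f \in \mathcal{F}_{g,S}(H)} \bigl| \sum_{n \le x} \mu^2(f(n)) - c_f x \bigr|^2 \ll H^{2-\delta'} x^{2-2\delta},
\]
since then the number of $f$ for which $|\sum_n \mu^2(f(n)) - c_f x| > x^{1-\delta}$ is $\ll V/x^{2-2\delta} \ll H^{2-\delta'}$, which yields the desired cardinality bound.

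To analyse the inner sum I would use $\mu^2(m) = \sum_{e^2 \mid m} \mu(e)$ and truncate at a parameter $z = x^\eta$ to split
\[
\sum_{n \le x} \mu^2(f(n)) = \sum_{e \le z}\mu(e) N_f(e,x) + R_f(z,x), \qquad N_f(e,x):= \#\{n \le x : e^2 \mid f(n)\}.
\]
The small-$e$ piece is handled pointwise in $f$: using $N_f(e,x) = x\rho_f(e^2)/e^2 + O(\rho_f(e^2))$, the multiplicativity of $\rho_f$, and the standard bound $\rho_f(e^2)\ll e^{\varepsilon}$ away from the (boundedly many) primes dividing $\disc f$, one recovers $c_f x$ with total error $O(xz^{-1+\varepsilon} + z^{1+\varepsilon})$, which is admissible for small $\eta$.

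The core of the argument is to control $R_f(z,x)$ in mean square. Expanding the square and swapping the sum over $f$ to the inside, this reduces to estimating
\[
\sum_{n_1, n_2 \le x} \sum_{e_1, e_2 > z} \#\{f \in \mathcal{F}_{g,S}(H) : e_i^2 \mid f(n_i),\ i=1,2\}.
\]
Because only the coefficients $a_\ell$ and $a_k$ of $f$ vary in a box of side $\asymp H$, the two divisibility constraints reduce to a system of two linear congruences in $(a_\ell, a_k)$ with coefficient matrix $\begin{pmatrix} n_1^{d-\ell} & n_1^{d-k}\\ n_2^{d-\ell} & n_2^{d-k}\end{pmatrix}$, whose determinant factors as $(n_1 n_2)^{d-\ell}(n_2^{\ell-k}-n_1^{\ell-k})$. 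A geometry-of-numbers count for linear congruences in a box then bounds the inner cardinality on the generic locus $\gcd(e_1 e_2,\det)=1$ by $\ll H^2/(e_1 e_2)^2$, and summing over $e_i>z$ and $n_i\le x$ yields a contribution of order $H^2 x^2/z^2$, acceptable once $z$ is chosen to be a sufficiently large explicit power of $x$.

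The main obstacle is the degenerate locus where $e_1,e_2$ share large common factors with the matrix entries or with the determinant; there the lattice collapses and the pointwise count can swell dramatically. I would handle this by adapting the off-diagonal and large-sieve strategy of Browning--Sofos--Teräväinen~\cite{browning2022batemanhorn}: isolate the diagonal $e_1=e_2$ and the subfamilies with forced common factors, apply divisor-type pointwise bounds when the lattice is degenerate, and exploit $\mu(e_1)\mu(e_2)$ cancellation on the truly off-diagonal part. The exponent $14/5+\varepsilon$ in the hypothesis $x^{d+\ell-2k+14/5+\varepsilon}\le H$ is precisely the loss incurred in balancing these degenerate contributions against the generic lattice estimate, while the auxiliary condition $x^{2\ell-d}\le H$ ensures that the top monomial $a_\ell n^{d-\ell}$ really does give a non-trivial range for the linear congruence in $a_\ell$, so that the geometry-of-numbers step is not vacuous. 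Choosing $\eta$ slightly above the resulting crossover point then gives a genuine power saving $x^{-\delta}$.
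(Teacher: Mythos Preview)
Your overall second-moment/Chebyshev scaffolding and the truncation $\mu^2(m)=\sum_{e^2\mid m}\mu(e)$ at a cut-off $z$ are exactly right, and the pointwise treatment of the small-$e$ piece matches the paper's Lemma~\ref{lem_functions}. But the heart of your proposal --- expanding the square in $R_f(z,x)$, swapping the $f$-sum inside, and counting lattice points in $(a_\ell,a_k)$ for each $(n_1,n_2,e_1,e_2)$ --- is \emph{not} how the paper (or \cite{browning2022batemanhorn}) proceeds, and the step where you invoke ``the off-diagonal and large-sieve strategy of Browning--Sofos--Ter\"av\"ainen'' to handle the degenerate locus is a genuine gap. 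The BST machinery is not a large-sieve argument on the expanded square; it is Corollary~\ref{cor_general}, which takes as input the equidistribution of a sequence $F$ in \emph{arithmetic progressions on short intervals} and outputs the variance bound \eqref{equ2} directly. The paper therefore never sees the sum over $e_1,e_2>z$ at all: it sets $F(n)=\mu^2(n)-k_D(n)$, verifies hypothesis~(2) of Corollary~\ref{cor_general} using Hooley's theorem (Proposition~\ref{prop-sqfree}) together with Lemma~\ref{sqfull2}, and the corollary does the rest. This is where the exponent $14/5$ actually comes from: it is the loss in matching the error $h((Hx^d/q)^{1/2}+q^{1/2+\varepsilon})$ from Hooley against the target $x^{-\delta}$ with $\delta=2/5$, and the condition $x^{2\ell-d}\le H$ is just the inequality $q^2\le x^{2\ell}\le Hx^d$ needed to absorb the $q^{1/2+\varepsilon}$ term. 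None of this is visible from a lattice-point count.

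Your direct route is not obviously hopeless, but as written it is incomplete: after geometry of numbers the error for each $(e_1,e_2)$ is of shape $O(H/e_i^2+1)$, and summing the ``$+1$'' over all $e_1,e_2$ up to $\sqrt{Hx^d}$ swamps everything; restricting to pairs $(e_1,e_2)$ that actually occur forces you back into divisor-function bounds on $f(n_i)$, which only give $x^\varepsilon$ and no saving. The ``$\mu(e_1)\mu(e_2)$ cancellation'' you mention has no obvious mechanism here, since the lattice-point count is nonnegative and destroys the sign. (Minor point: with the paper's conventions the varying coefficients multiply $n^\ell$ and $n^k$, so your matrix should have entries $n_i^{\ell},n_i^{k}$, not $n_i^{d-\ell},n_i^{d-k}$.) To fix the argument you should replace the direct expansion by the reduction to equidistribution in short APs, i.e.\ feed Proposition~\ref{prop-sqfree} and Lemma~\ref{sqfull2} into Corollary~\ref{cor_general}.
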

An explicit value that can be chosen for $\delta$ will be presented in the proof in section \ref{Chap_5}.
\begin{remark}
	By going through the proof of Theorem \ref{thm_aim} above, one can see that under the condition that $\ell/2+k>d+7/5$, the result above still holds, even if $x^{2\ell-d}\geq H$, after a slight change in the second relation between $x$ and $H$.
\end{remark}
\begin{remark}
	We can achieve the minimal error term by taking $\ell=1$ and $k=0$. Using the notation from above, we have the following statement:\\
	If $x$ is such that
	$$
	x^{d+19/5+\varepsilon}\leq H\leq x^A,
	$$
	then for all but
	$$
	H^{2-\frac{1-\varepsilon}{(5d+19)(2+d)}}
	$$
	degree $d$ polynomials $f$ in $\mathcal{F}_{g,S}(H)$, where the implied constant only depends on fixed parameters, we have
	\begin{equation}
		\left|\frac{1}{x}\sum_{1\leq n\leq x} \mu^2(f(n)) - c_fx\right|\leq x^{-\frac{1-\varepsilon}{20+10d}}.
	\end{equation}
\end{remark}
\begin{remark}
	We can achieve the maximal range of values of $x$ by choosing $\ell=\left[2d/3\right]$. Let $r(d)=2d/3-\ell$. Then using the notation from above, we have, after optimising for $k$, the following statement:\\
	Let $k=\ell-1$. If $x$ is such that
	$$
	x^{d/3+4/5+r(d)+\varepsilon}\leq H\leq x^A,
	$$
	then there exists some $\delta>0$ such that for all but
	$
	\ll H^{2-\delta}
	$ many
	degree $d$ polynomials $f$ in $\mathcal{F}_{g,S}(H)$, where the implied constant only depends on fixed parameters, we have
	\begin{equation}
		\left|\frac{1}{x}\sum_{1\leq n\leq x} \mu^2(f(n)) - c_fx\right|\leq x^{-\delta}.
	\end{equation}
\end{remark}

\subsection{Structure of the paper}
In Section \ref{Chap_2}, we present the tool that allows us to prove the result on average for random polynomials via equidistribution in short intervals and in arithmetic progressions. In Section \ref{Chap_3}, we derive results on how $\mu^2(\cdot)$ is distributed in the relevant cases. In Section \ref{Chap_4}, we find an approximation of $\mu^2(\cdot)$ in short intervals and in arithmetic progressions, which enables us to apply Corollary \ref{cor_general} to the setting of Theorem \ref{thm_aim} and to prove the theorem in Section \ref{Chap_5}.
\subsection{Notation}
$h$, $i$, $j$, $k$, $\ell$ and $q$ will always be integers and $p$ will always denote a prime number. Further, $I$ will always be an interval and $\mu(\cdot)$ will always denote the Möbius function, $\varphi(\cdot)$ the Euler totient function and $\tau(\cdot)$ the divisor function.\\
$A$, $d$ and $\varepsilon$ will be constants, where $A$ is usually very large, and $\varepsilon$ very small.
We will adopt Vinogradov's notation $O(\cdot)$ and $\ll$, and we will allow the implied constant to depend only on constant factors $A$, $d$ and $\varepsilon$ unless indicated in the subscript.

\subsection{Acknowledgements}
This paper is the result of my Master's thesis at the University of Vienna, written under the supervision of Tim Browning.\\
I am greatly indebted to Tim Browning, who welcomed me into his research group without hesitation and without whose insights and tips this project would not have been possible.\\
Special thanks go also to Igor Shparlinski for his valuable feedback.

\section{Main tool}\label{Chap_2}
We present a corollary of Theorem 2.2 in \cite{browning_bateman-horn_2022}. In the setting of this thesis, we expect a power-saving instead of a log-saving, so we change the second assumption accordingly. 
\begin{corollary}
	\label{cor_general} Let $A\geq 1$, $d>1$, $\delta>\varepsilon>0$ and $0\leq k< \ell \leq d$ be fixed. Let $H\geq H_0(A,d)$ and $x^{\ell-k+\varepsilon}<H\leq x^A$. Let $F:\mathbb{Z}\to \mathbb{C}$ be a sequence such that 
	\begin{enumerate}
		\item $|F(n)|\ll n^{\varepsilon}$ 
		for all $n \in \mathbb Z$;
		\item For any $q\leq x^{\ell}$
		\begin{align*}
			\max_{\substack{
					1\leq u\leq q\\
					\gcd(u,q)\leq x
			}}  
			\sup_{\substack{
					I  \textnormal{ interval } 
					\\
					|I|> H^{1-\varepsilon}x^k
					\\
					I\subset [-2H x^d, 2H x^d]
			}}
			\frac{q}{|I|}
			\left|\sum_{\substack{n\in I\\n\equiv u\bmod q}}F(n)\right|\ll x^{-\delta}.
		\end{align*}
	\end{enumerate} 
	Then, for any polynomial $g\in \mathbb{Z}[t]$ of degree $\leq d$ with coefficients in $[-H,H]$ and $g(0)\neq0$, and for any coefficients $\alpha_n\in \CC$ such that $|\alpha_n|\leq 1$, 
	we have 
	\begin{align}\label{equ2}
		\sup_{x'\in [x/2,x]}\sum_{|a|,|b|\leq H}\left|\sum_{1\leq n\leq x'}\alpha_nF(an^k+bn^{\ell}+g(n))\right|^2\ll \frac{H^2x^{2}}{x^{\delta'}},
	\end{align}
	with
	\begin{align*}
		\delta'
		=\min\left(\frac{\delta-\varepsilon}{1+\ell+d(\ell-k)} ,\frac{1-\varepsilon}{1+3\ell-2k+d}\right).
	\end{align*}
	
\end{corollary}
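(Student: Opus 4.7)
The plan is to adapt the proof of Theorem~2.2 of Browning, Sofos and Ter\"av\"ainen~\cite{browning_bateman-horn_2022} to the power-saving regime. The overall strategy is a second moment estimate: open the square, peel off the diagonal, and reduce the off-diagonal to arithmetic progression sums controlled by hypothesis~(2).

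After expanding the square, one has
$$\sum_{|a|,|b|\le H}\Bigl|\sum_n \alpha_n F(an^k+bn^\ell+g(n))\Bigr|^2 = \sum_{n_1,n_2}\alpha_{n_1}\overline{\alpha_{n_2}}\,U(n_1,n_2),$$
where $U(n_1,n_2) := \sum_{|a|,|b|\le H}F(P_1)\overline{F(P_2)}$ and $P_i := an_i^k+bn_i^\ell+g(n_i)$. The diagonal $n_1=n_2$ is handled using only hypothesis~(1): since $|P_n|\ll Hx^d$, one gets $\sum_n U(n,n)\ll xH^2(Hx^d)^{2\varepsilon}$, which is well within the target.

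For $n_1\ne n_2$, I would parametrise the inner sum by $(b, m_2)$, where $m_2 := P_2(a,b)$. For each compatible $m_2$ (i.e.\ $n_2^k \mid m_2-g(n_2)$), the integer $a$ is determined by $b$ via $a = (m_2-g(n_2))/n_2^k - b\,n_2^{\ell-k}$, and substituting gives
$$P_1 = \Bigl(\tfrac{n_1}{n_2}\Bigr)^{\!k}(m_2-g(n_2)) + b\,n_1^k\bigl(n_1^{\ell-k}-n_2^{\ell-k}\bigr) + g(n_1).$$
Thus as $b$ runs over an interval of length $\asymp H/n_2^{\ell-k}$, the value $P_1$ traces an arithmetic progression with common difference $D := n_1^k(n_1^{\ell-k}-n_2^{\ell-k})$ inside an interval of length $\asymp H n_1^k|n_1^{\ell-k}-n_2^{\ell-k}|/n_2^{\ell-k}$. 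Since $|D|\le n_1^\ell\le x^\ell$, the modulus condition in hypothesis~(2) is satisfied, and provided the interval length is at least $H^{1-\varepsilon}x^k$, the inner sum over $F(P_1)$ is $\ll (H/n_2^{\ell-k})\,x^{-\delta}$. Bounding $|\overline{F(m_2)}|\ll (Hx^d)^\varepsilon$ and summing over the $\asymp H\,n_2^{\ell-k}$ admissible values of $m_2$ yields $|U(n_1,n_2)|\ll H^2(Hx^d)^\varepsilon x^{-\delta}$, which after summation over $n_1,n_2$ gives the desired bound.

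The main obstacle is handling the pairs with $|n_1-n_2|$ so small that the interval above falls below the threshold $H^{1-\varepsilon}x^k$; these require either a rearrangement of the change of variables (swapping the roles of $n_1,n_2$ or of $a,b$) or a separate near-diagonal estimate, after which the two regimes must be balanced. The two entries of the $\min$ defining $\delta'$ correspond precisely to these regimes: the first is the saving $\delta$ from hypothesis~(2) diluted by the $(Hx^d)^\varepsilon$ loss, the number of admissible $m_2$, and the count of pairs $(n_1,n_2)$, while the second comes from the near-diagonal estimate together with the standing assumption $x^{\ell-k+\varepsilon}<H$. A secondary technical point is checking that the coprimality condition $\gcd(u,q)\le x$ in hypothesis~(2) holds for the relevant residues $u\equiv (n_1/n_2)^k(m_2-g(n_2))+g(n_1)\pmod D$, but since $D\le x^\ell$ this is routine.
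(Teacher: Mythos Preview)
Your overall strategy---expand, change variables to expose an arithmetic progression, invoke hypothesis~(2)---matches the paper's. However, there is one genuine gap and one missing structural element.

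The gap is your last sentence: verifying $\gcd(u,q)\le x$ is \emph{not} routine, and the bound $D\le x^{\ell}$ is irrelevant to it. In the paper's setup (and equivalently in yours) one has $q=\operatorname{lcm}(n_1^k,|\Delta|)$ with $\Delta=n_1^{\ell-k}-n_2^{\ell-k}$, and the residue $u$ satisfies $u\equiv g(n_1)\pmod{n_1^k}$ and $u\equiv n_1^k m_2+g(n_1)\pmod{\Delta}$. Hence $\gcd(u,q)\le \gcd(g(n_1),n_1^k)\cdot\gcd(n_1^k m_2+g(n_1),\Delta)$, and \emph{both} factors require work. The first is controlled only via the hypothesis $g(0)\neq 0$ (this is precisely where that hypothesis enters), and still only generically: one must restrict to a ``typical'' set of pairs where $\gcd(n_1^d,g(n_1))<x^{\kappa+\varepsilon}$. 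The second factor is handled by a separate case split: when it exceeds $x^{1-\kappa-\varepsilon}$ one abandons hypothesis~(2) entirely and instead bounds the number of bad $m_2$ via a large-gcd counting lemma, which in turn requires $\gcd(n_1,n_2)$ to be small (another condition on the typical set). None of this is visible in your sketch.

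The structural omission is the parameter~$\kappa$. The paper introduces a typical set $\mathcal{M}_{\kappa,d}$ depending on $\kappa$, bounds its complement by $x^{2-\kappa/d}$, and obtains four competing error terms with exponents depending on~$\kappa$; equating these and solving for~$\kappa$ is exactly what produces the two entries of the $\min$ defining~$\delta'$. Your description of ``balancing the two regimes'' is too vague to recover the stated exponents, and your interpretation of the second term as coming from a ``near-diagonal estimate together with $x^{\ell-k+\varepsilon}<H$'' does not match what actually happens: that term arises from the large-gcd case just described.
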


To prove this corollary, we proceed along the same lines as in the proof of Theorem 2.2 in \cite{browning_bateman-horn_2022}. We first state some lemmata which we use in the proof. An analogous version of each lemma can be found in \cite{browning_bateman-horn_2022}.

\begin{lemma}\label{large_div}
	Let $q\in\NN$ and $z\geq 1$. If $q\leq z^A$, then
	\begin{align}
		\sum_{\substack{d\mid q \\ d\geq z}}\frac{1}{d}\ll\frac{1}{z^{1-\varepsilon}}.
	\end{align}
\end{lemma}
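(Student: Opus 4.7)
The estimate is a clean application of Rankin's trick combined with the standard divisor bound, and I would proceed as follows.

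First, I would upper bound the sum by artificially inserting the factor $(d/z)^{1-\eta}$, which is $\geq 1$ on the range $d \geq z$ for any $\eta \in (0,1)$. Explicitly, for any such $\eta$,
\begin{equation*}
\sum_{\substack{d \mid q \\ d \geq z}} \frac{1}{d}
\;\leq\; \sum_{\substack{d \mid q \\ d \geq z}} \frac{1}{d}\left(\frac{d}{z}\right)^{1-\eta}
\;=\; z^{\eta - 1}\sum_{\substack{d \mid q \\ d \geq z}} d^{-\eta}
\;\leq\; z^{\eta - 1}\sum_{d \mid q} d^{-\eta}.
\end{equation*}
Since $d^{-\eta} \leq 1$, the remaining divisor sum is at most $\tau(q)$.

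Next I would invoke the classical divisor bound $\tau(q) \ll_{\varepsilon} q^{\varepsilon/(2A)}$, together with the hypothesis $q \leq z^A$, which together yield $\tau(q) \ll z^{\varepsilon/2}$. Feeding this into the previous display gives
\begin{equation*}
\sum_{\substack{d \mid q \\ d \geq z}} \frac{1}{d}
\;\ll\; z^{\eta - 1 + \varepsilon/2}.
\end{equation*}
Finally, I would choose $\eta = \varepsilon/2$, which produces the required bound $\ll z^{\varepsilon - 1} = z^{-(1-\varepsilon)}$.

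There is no real obstacle here; the only subtlety worth flagging is that the divisor bound $\tau(q) \ll_\varepsilon q^\varepsilon$ carries an implied constant depending on $\varepsilon$, but since $\varepsilon$ and $A$ are both fixed parameters of the paper (and the notational convention already permits such dependence in the implied constants), this does not affect the statement. The assumption $q \leq z^A$ is used in exactly one place, namely to convert $q^{\varepsilon/(2A)}$ into $z^{\varepsilon/2}$, which is precisely why the exponent $A$ appears in the hypothesis.
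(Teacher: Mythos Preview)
Your argument is correct and uses the same two ingredients as the paper, namely the trivial bound on the summands together with the divisor bound $\tau(q)\ll q^{\varepsilon'}$ combined with $q\leq z^A$. The paper's version is slightly more direct: it simply bounds $1/d\leq 1/z$ for $d\geq z$ to obtain $\tau(q)/z$ and then applies $\tau(q)\ll z^{\varepsilon}$, so the Rankin step (which you in any case collapse by bounding $d^{-\eta}\leq 1$) is not needed.
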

\begin{proof}
	By the standard estimate for the divisor function and by the assumption $q\leq z^A$, we obtain that
	$$
	\sum_{\substack{d\mid q \\ d\geq z}}\frac{1}{d}\ll
	\frac{1}{z}\sum_{d\mid q}1\ll
	\frac{1}{z^{1-\varepsilon}}.
	$$
\end{proof}
\begin{lemma}\label{large_gcd}
	Let $A\geq1$, $a,c \in \ZZ\backslash\{0\}$, $b\in \ZZ$ and $x_1,x_2\geq 1$, such that $|c|<x_2^A$. Then we have
	$$
	\#\{n\in\ZZ\cap[-x_1,x_1]:\gcd(an+b,c)>x_2\}\ll \frac{x_1\gcd(a,c)}{x_2^{1-\varepsilon}}+\tau(c).
	$$
\end{lemma}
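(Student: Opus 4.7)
The plan is to decompose the count according to the possible values of $d=\gcd(an+b,c)$: each such $d$ is a divisor of $c$ exceeding $x_2$, and for a fixed $d$ the condition $d\mid an+b$ restricts $n$ to an arithmetic progression. This reduces the problem to summing a linear-congruence count over divisors of $c$, which can then be handled by Lemma \ref{large_div}.

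Concretely, I would begin from the crude upper bound
$$
\#\{n\in\ZZ\cap[-x_1,x_1]:\gcd(an+b,c)>x_2\}\le\sum_{\substack{d\mid c\\ d>x_2}}\#\{n\in\ZZ\cap[-x_1,x_1]:d\mid an+b\}.
$$
For each fixed $d$, the inner set is either empty (when $\gcd(a,d)\nmid b$) or forms an arithmetic progression with common difference $d/\gcd(a,d)$, so its cardinality is at most $2x_1\gcd(a,d)/d+1$.

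I would then split the resulting double sum into two contributions. The ``$+1$'' term sums trivially to at most $\sum_{d\mid c}1=\tau(c)$. For the main term, since $d\mid c$ implies $\gcd(a,d)\mid\gcd(a,c)$ and in particular $\gcd(a,d)\le\gcd(a,c)$, I can factor out $\gcd(a,c)$ to obtain
$$
\sum_{\substack{d\mid c\\ d>x_2}}\frac{x_1\gcd(a,d)}{d}\le x_1\gcd(a,c)\sum_{\substack{d\mid c\\ d>x_2}}\frac{1}{d}.
$$
The remaining tail sum is exactly of the shape handled by Lemma \ref{large_div}, since the assumption $|c|<x_2^A$ is in force, so it is $\ll x_2^{-(1-\varepsilon)}$. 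Adding the two contributions yields the claimed inequality.

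The whole argument is short and elementary, so there is no real obstacle. The only points to watch are the inequality $\gcd(a,d)\le\gcd(a,c)$, which is what ensures the final bound depends only on $\gcd(a,c)$ rather than on the individual divisors $d$, and the accounting of the ``$+1$'' terms, which are what produce the $\tau(c)$ summand and are genuinely necessary when the divisors $d$ are so large that the corresponding arithmetic progression contains at most one point of $[-x_1,x_1]$.
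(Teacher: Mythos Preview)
Your argument is correct and is precisely the approach the paper intends: the paper merely states that the proof follows the analogous argument in \cite{browning_bateman-horn_2022} with Lemma~\ref{large_div} inserted for the tail sum $\sum_{d\mid c,\,d>x_2}1/d$, and your decomposition by divisors $d\mid c$, linear-congruence count, and splitting into the $\tau(c)$ and $x_1\gcd(a,c)/x_2^{1-\varepsilon}$ contributions is exactly that argument written out in full.
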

The proof follows the proof in \cite{browning_bateman-horn_2022}, now using \eqref{large_div} as an estimate.

\begin{lemma}\label{le_typical} Let $\varepsilon>0 $, $d\in \N$, $d>\kappa>0$ be fixed and let $x> 1 $. Let $g\in\ZZ [t] $ be a polynomial of degree $\leq d$ such that $g(0)\neq0$. Define
	\begin{equation}\label{eq:define_M}
		\mathcal{M}_{\kappa,d}:=\left\{\mathbf n \in (\N\cap [1,x])^2: 
		\begin{array}{l}
			\text{$n_1,n_2>x^{1-\kappa/d}$ and $|n_1-n_2|>x^{1-\kappa/d}$}\\
			\gcd(n_1,n_2)<x^{\kappa/d}\\
			\gcd(n_1^d,g(n_1))<x^{\kappa+\varepsilon}
		\end{array}
		\right\}.
	\end{equation}
	Then $\#((\N\cap[1,x])^2\setminus \mathcal{M}_{\kappa,d})\ll x^{2-\kappa/d}$.
\end{lemma}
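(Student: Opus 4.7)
The plan is to bound the complement of $\mathcal{M}_{\kappa,d}$ inside $(\NN\cap[1,x])^2$ by splitting it into three pieces, one for each of the three defining conditions that may fail, and to show that each piece has cardinality $\ll x^{2-\kappa/d}$.

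The first two pieces are routine. The set where $\min(n_1,n_2)\leq x^{1-\kappa/d}$ or $|n_1-n_2|\leq x^{1-\kappa/d}$ has size $\ll x\cdot x^{1-\kappa/d}$ by direct enumeration (fixing $n_1$, there are only $O(x^{1-\kappa/d})$ admissible $n_2$). For the set where $\gcd(n_1,n_2)\geq x^{\kappa/d}$, I would use a union bound over the common divisor $m$:
$$
\#\{(n_1,n_2):\exists m\geq x^{\kappa/d}\text{ with }m\mid n_1,\ m\mid n_2\}\leq \sum_{m\geq x^{\kappa/d}}\Bigl(\frac{x}{m}\Bigr)^{\!2}\ll x^{2-\kappa/d}.
$$

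The third piece is the heart of the matter. The observation I would establish is the algebraic divisibility
$$
\gcd(n_1^d,g(n_1))\ \Big|\ \gcd(n_1,g(0))^d.
$$
To prove it, write $g(n_1)=g(0)+n_1h(n_1)$ with $h\in\ZZ[t]$ and compare $p$-adic valuations prime by prime: setting $k=v_p(n_1)$ and $j=v_p(g(0))$, a short case split on whether $j<k$ or $j\geq k$ shows that $\min(dk,v_p(g(n_1)))\leq d\min(k,j)$ in every case. Granting this, any $n_1$ failing the third condition must satisfy $\gcd(n_1,g(0))\geq x^{(\kappa+\varepsilon)/d}$. Since $|g(0)|\leq H\leq x^A$, the number of such $n_1$ is bounded by Lemma \ref{large_gcd} applied with $a=1$, $b=0$, $c=g(0)$ (equivalently, a divisor sum combined with Lemma \ref{large_div}), giving $\ll x^{1-(\kappa+\varepsilon)(1-\varepsilon')/d}+\tau(g(0))\ll x^{1-\kappa/d}$ once the implicit $\varepsilon'$ is chosen small enough relative to $\varepsilon$. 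Multiplying by the trivial bound $x$ for the free variable $n_2$ yields the desired $\ll x^{2-\kappa/d}$.

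The main obstacle is the divisibility claim itself: one must see that $\gcd(n_1^d,g(n_1))$ is controlled by the constant term $g(0)$ alone, independently of the other coefficients of $g$, which in our setting can be as large as $H\leq x^A$ and would otherwise preclude any useful bound. The small $\varepsilon$ slack in the condition $\gcd(n_1^d,g(n_1))<x^{\kappa+\varepsilon}$ is exactly what lets Lemma \ref{large_div} absorb the divisor-function loss at the end.
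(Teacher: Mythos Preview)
Your proof is correct and follows exactly the approach the paper indicates: a union bound over the three defining conditions, with the key step for the third condition being the divisibility $\gcd(n_1^d,g(n_1))\mid\gcd(n_1,g(0))^d$ and then an application of Lemma~\ref{large_gcd}/\ref{large_div}. Your appeal to $|g(0)|\leq H\leq x^A$ to control the $\tau(g(0))$ term is strictly speaking not among the lemma's stated hypotheses, but this bound holds in the only place the lemma is applied (Corollary~\ref{cor_general}), so the argument is sound in context.
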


We show Lemma \ref{le_typical} completely analogously to the corresponding statement in \cite{browning_bateman-horn_2022}. In both cases, the proof is done by considering the number of pairs failing each property individually and then applying the union bound.\\
Now we have stated all the necessary lemmata to prove the corollary.
\begin{remark}
	Due to the assumption that $x^{\ell-k+\varepsilon}<H\leq x^A$, we may interchange $(Hx^d)^{\varepsilon}$, $ H^{\varepsilon}$ and $x^{\varepsilon}$ throughout the proof.
\end{remark}
\subsection{Proof of Corollary \ref{cor_general}}
In the proof, we will expand the left-hand side of \eqref{equ2}, and we will show that the inequality holds for all $x'\in\left[x/2,x\right]$ and $g\in\ZZ\left[t\right]$, hence also for the maximal value that any such combination of $x'$ and $g$ can reach.\\
The left-hand side now looks like:
\begin{align}\label{eq_start}
	\sum_{1\leq n_1,n_2\leq x'} \alpha_{n_1}\overline\alpha_{n_2}\sum_{|a|,|b|\leq H}F(an_1^k+bn_1^{\ell}+g(n_1))\overline{F}(an_2^k+bn_2^{\ell}+g(n_2)).  
\end{align}
Using the notation of Lemma \ref{le_typical}, we define the sets $\mathcal{M}:=\mathcal{M}_{\kappa,d}$ and $\mathcal{M}^c := \left\{(n_1,n_2)\in \NN^2:n_1,n_2 \leq x\right\}\backslash\mathcal{M}$, where $\kappa$ is a parameter that will be defined later. In order to consider their contributions individually, we define the following two cases:
\begin{enumerate}
	\item $(n_1,n_2)\in \mathcal{M}^c$;
	\item $(n_1,n_2)\in \mathcal{M}$.
\end{enumerate}
Lemma \ref{le_typical} gives us that
\begin{equation}
	\#\mathcal{M}^c\ll x^{2-\kappa/d}.
\end{equation}
\subsubsection{Contribution of $(n_1,n_2)\in \mathcal{M}^c$}
We can use the estimate above to bound the contribution of $\mathcal{M}^c$. By the first assumption of Corollary \ref{cor_general}, we have that
\begin{align*}
	&\sum_{n_1,n_2\in \mathcal{M}^c} \alpha_{n_1}\overline\alpha_{n_2}\sum_{|a|,|b|\leq H}F(an_1^k+bn_1^{\ell}+g(n_1))\overline{F}(an_2^k+bn_2^{\ell}+g(n_2))\\
	&\ll\# \mathcal{M}^c H^2H^{\varepsilon}\\
	&\ll H^2x^{2-\kappa/d+\varepsilon}.
\end{align*}
We now turn to Case $(2)$ and follow the argument in \cite{browning_bateman-horn_2022}.
\subsubsection{Contribution of $(n_1,n_2)\in \mathcal{M}$}
We first take absolute values and use the triangle inequality to bound the contribution of $\mathcal{ M}$ to \eqref{eq_start}. Hence, the contribution is
\begin{equation}\label{eq_start2}
	\ll\sum_{\mathbf{n} \in \mathcal{M}} \left|\alpha_{n_1}\overline\alpha_{n_2}\right|\left|\sum_{|a||b|\leq H}F(an_1^k+bn_1^{\ell}+g(n_1))\overline{F}(an_2^k+bn_2^{\ell}+g(n_2))\right|. 
\end{equation}
Since $\left|\alpha_{n_1}\right|,\left|\alpha_{n_2}\right|\leq 1$, and after using the triangle inequality again, \eqref{eq_start2} can be written as
\begin{align}\label{eq:9equ10} 
	\ll\sum_{\mathbf n \in \mathcal{M}}
	\sum_{m_2\in \mathbb{Z}}F(n_2^km_2+g(n_2))\left|\sum_{m_1\in \mathbb{Z}}
	F(n_1^{k}m_1+g(n_1))
	\gamma(\mathbf m ) \right|, 
\end{align}
where we take $m_i=a+bn_i^{\ell-k}$ for $i=1,2$ and let
$$ \gamma(\mathbf m ) :=\#\left\{(a,b) \in (\Z\cap [-H,H] )^2: 
m_i=a +bn_i^{\ell-k} \ \forall i=1,2 \right\}.
$$
From now on, we will use the following shorthand:
$$
\Delta :=n_1^{\ell-k}-n_2^{\ell-k}.
$$
This can be bounded below using the first property of $\mathcal{M}$:
\begin{align}\label{eqq17}
	|\Delta|\geq |n_1 -n_2| n_1^{\ell-k-1}
	\gg x^{(\ell-k)(1-\kappa/d)}.    
\end{align} 
We have by the definition of $m_2$ that
$$ 
| m_2| \leq H +n_2^{\ell-k} H\leq 2 n_2^{\ell-k}H.
$$
Now we rewrite $ \gamma(\mathbf m )$ as an indicator function of the following simultaneous statements: 
\begin{equation}\label{eq:event02}
	\Delta \mid (m_1-m_2),\quad
	|m_1-m_2| \leq | \Delta| H,\quad 
	|m_2n_1^{\ell-k}-m_1n_2^{\ell-k}| \leq  |\Delta| H.
\end{equation}
The latter two conditions can be merged as $m_1\in J(\mathbf n,m_2)$ for some interval $J(\mathbf n,m_2)$ whose length can be bounded above using the first property of $\mathcal{ M}$. This yields 
\begin{align}\label{eqq26}
	|J(\mathbf n ,m_2)|\leq 2 |\Delta| H/ n_2^{\ell-k } 
	\leq   2H x^{\ell-k}/n_2^{\ell-k}
	\leq   2  H x^{(\ell-k)\kappa/d}
	. 
\end{align} 
Hence, equation \eqref{eq:9equ10} can be rewritten again as
\begin{equation}\label{eqq14}
	\begin{split}
		\sum_{\mathbf n \in \mathcal{M}}\sum_{\substack{|m_2|\leq 2 n_2^{\ell-k}H}}(Hx^d)^{\varepsilon}
		\left|S_1\right|,
	\end{split}
\end{equation} 
where
$$
S_1=\sum_{\substack{m_1\in 
		I_1(\mathbf n) \cap J(\mathbf n,m_2)  
		\\m_1\equiv m_2\bmod{\Delta}}}F(n_1^{k}m_1+g(n_1)).
$$
We now need to distinguish if $\gcd(n_1^km_2+g(n_1),\Delta)$ is larger than $x^{1-\kappa-\varepsilon}$ or not.\\
First we consider the case where $\gcd(n_1^km_2+g(n_1),\Delta)>x^{1-\kappa-\varepsilon}$. By assumption $(1)$ of Corollary \ref{cor_general}, we have that the expression in \eqref{eqq14} is
\begin{equation}
	\ll H^{\varepsilon}\sum_{\mathbf n \in \mathcal{M}}\sum_{\substack{|m_2|\leq 2 n_2^{\ell-k}H\\\gcd(n_1^km_2+g(n_1),\Delta)>x^{1-\kappa-\varepsilon}}}
	\left|\sum_{\substack{m_1\in 
			I_1(\mathbf n) \cap J(\mathbf n,m_2)  
			\\m_1\equiv m_2\bmod{\Delta}}}(Hx^d)^{\varepsilon}\right|.
\end{equation} 
By the estimate of the sizes of $J(\mathbf n,m_2)$ and $|\Delta|$, we have that the inner sum is
$$
\ll H^{1+\varepsilon}x^{(l-k)(2\kappa/d-1)}+1\ll H^{1+\varepsilon}x^{(l-k)(2\kappa/d-1)},
$$
since $H>x^{\ell-k}$.\\
Hence, we get that the expression is 
\begin{equation}
	\ll H^{1+\varepsilon}x^{(l-k)(2\kappa/d-1)}
	\sum_{\mathbf n \in \mathcal{M}}\sum_{\substack{|m_2|\leq 2 n_2^{\ell-k}H\\\gcd(n_1^km_2+g(n_1),\Delta)>x^{1-\kappa-\varepsilon}}}
	1.
\end{equation}
Now by Lemma \ref{large_gcd}, we get that the sum over $m_2$ is
$$
\ll \frac{x^{\ell-k}Hx^{\kappa\ell/d}}{x^{1-\kappa-\varepsilon}}+\tau(\Delta) \ll \frac{x^{\ell-k}Hx^{\kappa\ell/d}}{x^{1-\kappa-\varepsilon}},
$$
since $$
\gcd(n_1^k,\Delta)=
\gcd(n_1^k,n_1^{\ell-k}-n_2^{\ell-k})\ll 
\gcd(n_1,n_2)^{\ell}
\ll x^{\kappa\ell/d}.
$$
Combining everything, we get that \eqref{eqq14} is
$$
\ll x^2 H^{1+\varepsilon}x^{(\ell-k)(2\kappa/d-1)} \frac{x^{\ell-k}Hx^{\kappa\ell/d}}{x^{1-\kappa - \varepsilon}}
\ll \frac{x^2 H^{2}}{x^{1-\varepsilon-\kappa((3\ell-2k)/d+1)}}.
$$
Now we consider the case where $\gcd(n_1^km_2+g(n_1),\Delta)\leq x^{1-\kappa-\varepsilon}$.\\
Making the change of variables $m=n_1^{k}m_1+g(n_1)$ in \eqref{eqq14}, we see that 
$$
S_1=\sum_{\substack{m\in J'(\mathbf n,m_2)
		\\
		m\equiv u \bmod{q}}}F(m)    ,
$$ 
where 
\begin{itemize}
	\item $u$ is the unique solution $\bmod$ $q$ to $u\equiv n_1^{k}m_2+g(n_1) \bmod{\Delta}$ and $u\equiv g(n_1)\bmod{n_1^k}$, where $q=  \textnormal{lcm}(n_1^k, |\Delta|)\leq x^{\ell}$;
	\item $J'(\mathbf n,m_2)=n_1^{k}J(\mathbf n,m_2)+g(n_1)$  
	is an interval with
	\begin{align}\label{eq:jbound}
		|J'(\mathbf n,m_2) | =  n_1^{k} |J(\mathbf n ,m_2)|\leq 2n_1^k|\Delta|H/n_2^{\ell-k},
	\end{align}
	where the inequality follows from \eqref{eqq26}.
\end{itemize}
Also, using the second property of \eqref{eq:define_M}, we get that 
$$q\geq \frac{n_1^k\left|\Delta\right|}{\gcd(n_1^k,\Delta)}\geq\frac{n_1^k\left|\Delta\right|}{x^{\kappa\ell/d}}.$$
Furthermore, we have
\begin{align*}
	\gcd(u,q)&\leq \gcd(g(n_1),n_1^k)\gcd(n_1km_2+g(n_1),|\Delta|)\\
	&\leq x^{\kappa+\varepsilon}x^{1-\kappa-\varepsilon}\\
	&\leq x
\end{align*}
by the third property in \eqref{eq:define_M}.\\
We can now estimate the size of $S_1$ using assumption $(2)$ of Corollary \ref{cor_general}. Hence, we get that
\[
|S_1|
\ll \frac{ |J'(\mathbf n,m_2)|}{q x^{\delta}}  
\ll  \frac{ n_1^k |\Delta| H/n_2^{\ell-k}}{q x^{\delta}} 
\ll \frac{   H x^{\kappa\ell/d}/n_2^{\ell-k}}{ x^{\delta}} 
\ll  \frac{   H }{x^{\delta-\kappa\ell/d+(\ell-k)(1-\kappa)}} 
,  \]
except when $|J'(\mathbf n,m_2)|\leq H^{1-\varepsilon}x^k$; that case will be considered below.
Therefore, we can see by assumption $(1)$ that the left-hand side of~\eqref{eqq14} is 
\begin{align*}  
	&\ll \sum_{n_1\leq x}\sum_{n_2\leq x} \sum_{|m_2|\leq 2n_2^{\ell-k}H}(Hx^d)^{\varepsilon}
	\frac{   H }{x^{\delta-\kappa\ell/d+(\ell-k)(1-\kappa)}}\\
	&\ll \sum_{n_1\leq x}\sum_{n_2\leq x}\frac{   H^{2+\varepsilon }x^{l-k}}{x^{\delta-\kappa\ell/d+(\ell-k)(1-\kappa)}}\\
	&\ll \frac{x^2H^2}{x^{\delta-(\ell/d+\ell-k)\kappa-\varepsilon}}.
\end{align*}
This is an upper bound in this case.\\
In the case where $|J'(\mathbf n,m_2)|\leq H^{1-\varepsilon}x^k$, we use
$|F(n)|\ll (Hx^d)^{\varepsilon} $, which gives us the estimate
$$
|S_1|\ll (Hx^d)^{\varepsilon}  \left(\frac{|J'(\mathbf n,m_2)|}{q}+1\right)\ll (Hx^d)^{\varepsilon}  \left(\frac{H^{1-\varepsilon}x^{k+\kappa\ell/d}}{x^{\ell - k}}\right),
$$
since $q\leq x^{\ell}< H^{1-\varepsilon}x^k$.\\
The case $|J'(\mathbf n,m_2)|\leq H^{1-\varepsilon}x^k$ remains to be investigated. 
First, we assume $n_1>n_2$. By the definition of $J'(\mathbf n,m_2)$, we have that $J'(\mathbf n,m_2)$ is the intersubsection of the following two intervals:
\begin{align*}
	n_1^k\cdot[m_2-|\Delta|H&, m_2+|\Delta|H]\\
	n_1^k\cdot[m_2\frac{n_1^{\ell-k}}{n_2^{\ell - k}}-\frac{|\Delta|H}{n_2^{\ell-k}}&, m_2\frac{n_1^{\ell-k}}{n_2^{\ell - k}}+\frac{|\Delta|H}{n_2^{\ell-k}}]
\end{align*}
We know by the assumption $n_1>n_2$ that if the interval has length smaller than $H^{1-\varepsilon}x^k$, then the following inequality is satisfied:
$$
|m_2|+|\Delta|H-\frac{H^{1-\varepsilon}x^k}{n_1^k}<|m_2|\frac{n_1^{\ell-k}}{n_2^{\ell - k}}-\frac{|\Delta|H}{n_2^{\ell-k}}.
$$
We rearrange and recall the definition of $\Delta$. This gives us that
$$
|m_2|>H(n_2^{\ell-k}+1)-\frac{H^{1-\varepsilon}x^kn_2^{\ell-k}}{|\Delta|n_1^k}.
$$
In the case $n_1<n_2$, we start with the analogous inequalities and get the same result.\\
By the definition of $m_2$, we also know that $|m_2|<H(n_2^{\ell-k}+1)$, 	hence we have that $|m_2|\in U(n_2)$, where $U(n_2):=[H(n_2^{\ell-k}+1)-\frac{H^{1-\varepsilon}x^kn_2^{\ell-k}}{|\Delta|n_1^k},H(n_2^{\ell-k}+1)]$.\\
Plugging everything into equation \eqref{eqq14} and using the size estimate of $|\Delta|$, we get
\begin{align*}
	&\ll \sum_{n_1\leq x}\sum_{n_2\leq x} \sum_{|m_2|\in U(n_2)}(Hx^d)^{\varepsilon}
	(Hx^d)^{\varepsilon} \left(\frac{H^{1-\varepsilon}x^{k+\kappa\ell/d}}{x^{\ell - k}n_1^k}\right)\\
	&\ll H^{2+\varepsilon} \sum_{n_1\leq x}\sum_{n_2\leq x} \frac{n_2^{\ell-k}x^{2k+\kappa\ell/d}}{|\Delta|x^{\ell - k}n_1^{2k}}\\
	&\ll x^{2+\varepsilon}H^{2} \frac{x^{\kappa\ell/d}}{|\Delta|}\\
	&\ll x^2H^{2} \frac{1}{x^{(\ell-k)(1-\kappa/d)-\kappa\ell/d}-\varepsilon}.
\end{align*}
This provides an upper bound in this case.
\subsubsection{Optimising $\kappa$}
Now we have in total four different upper bounds for the desired quantity. Their denominators are $x^{\kappa/d+\varepsilon}$, $x^{1-\varepsilon-\kappa((3\ell-2k)/d+1)}$, $x^{\delta-(\ell/d+l-k)\kappa-\varepsilon}$ and $x^{(\ell-k)(1-\kappa/d)-\kappa\ell/d - \varepsilon}$.\\
We now want to find the optimal $\kappa$ to maximise the exponent. By the assumption that $d>1$, we have the following inequality:
\begin{align*}
	(\ell-k)(1-\kappa/d)-\kappa\ell/d - \varepsilon &>1-\varepsilon-\kappa(\ell/d+2(\ell-k)+1).
\end{align*}
Now $\delta-(\ell/d+l-k)\kappa-\varepsilon$ and $1-\varepsilon-\kappa(\ell/d+2(\ell-k)+1)$ are decreasing in $\kappa$, while $\kappa/d+\varepsilon$ is increasing in $\kappa$. Since neither of the decreasing functions is strictly smaller than the other for all relevant values of $\delta$ and $\kappa$, the optimal $\kappa$ is the minimal solution to the following two equations:
\begin{itemize}
	\item $\delta-(\ell/d+l-k)\kappa-\varepsilon=\kappa/d+\varepsilon$;
	\item $1-\varepsilon-\kappa((3\ell-2k)/d+1)=\kappa/d+\varepsilon$.
\end{itemize}
The first equation gives us that $\kappa=\frac{\delta-\varepsilon}{(1+\ell)/d+(\ell-k)}$, the second one gives us that\newline $\kappa=\frac{1-\varepsilon}{(1+3\ell-2k)/d+1}$.\\
Therefore, we have that $\kappa=\min\left(\frac{\delta-\varepsilon}{(1+\ell)/d+(\ell-k)} ,\frac{1-\varepsilon}{(1+3\ell-2k)/d+1}\right)$ is the optimal choice, which completes the proof.

\section{Equidistribution of square-free numbers in short intervals}\label{Chap_3}
In this section, we will prove the following result, which will help us to verify that the second assumption of Corollary \ref{cor_general} is satisfied.

\begin{proposition}[Square-free numbers in short intervals and arithmetic progressions]\label{prop-sqfree}
	Let $q\geq1, a\geq0$, let $h=\gcd\left(a,q\right)$ and $q'h=q$. Further, let $0<y<x$ be any real numbers, then
	\begin{align}
		\sum_{\substack{x-y\leq n \leq x\\n\equiv a\bmod q}} \mu^2\left(n\right) &= \frac{6}{\pi^2}\frac{\mu^2\left(h\right)y}{q}\prod_{p\mid q'} \left(1-\frac{1}{p}\right)^{-1}\prod\limits_{p\mid q}\left(1+\frac{1}{p}\right)^{-1}\nonumber\\ &+O\left(h\left(\left(\frac{x}{q}\right)^{1/2}+q^{1/2+\varepsilon}\right)\right).
	\end{align}
\end{proposition}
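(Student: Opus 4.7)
The plan is to adapt Hooley's 1975 argument for squarefree integers in an arithmetic progression to the short-interval setting, proceeding by Möbius inversion and splitting the resulting $d$-sum at a threshold chosen to balance the two error terms. First I apply the identity $\mu^2(n)=\sum_{d^2\mid n}\mu(d)$ and swap the order of summation to write
$$S := \sum_{\substack{x-y\leq n\leq x\\ n\equiv a\bmod q}}\mu^2(n) = \sum_{d\leq\sqrt{x}}\mu(d)\,N_d,$$
where $N_d$ counts integers $n\in[x-y,x]$ in the AP $a\bmod q$ with $d^2\mid n$. The constraints $n\equiv a\pmod q$ and $n\equiv 0\pmod{d^2}$ are simultaneously solvable iff $\gcd(q,d^2)\mid h$, and when compatible they cut out a single residue class modulo $qd^2/\gcd(q,d^2)$, giving $N_d = y\gcd(q,d^2)/(qd^2)+O(1)$.

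Next I introduce the threshold $D:=\sqrt{x/q}$ and treat the two ranges separately. For $d\leq D$ the formula above produces a main term $\frac{y}{q}\sum_{d\leq D,\,\gcd(q,d^2)\mid h}\mu(d)\gcd(q,d^2)/d^2$ plus an error $O(D)=O((x/q)^{1/2})$. Extending this truncated sum to all $d\geq 1$ costs an additional harmless $O((x/q)^{1/2})$ via the crude bound $\gcd(q,d^2)\leq q$. The summand is multiplicative in $d$, so the completed series factors as an Euler product $\prod_p\lambda_p$; a short case analysis on $v_p(d)\in\{0,1\}$ (splitting into $p\nmid q$; $p\mid q$ with $v_p(q)=1$ and $p\mid h$; $p\mid q'$; and $v_p(h)\geq 2$) yields $\lambda_p=1-1/p^2$, $\lambda_p=1-1/p$, $\lambda_p=1$, and $\lambda_p=0$ respectively. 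The vanishing factors force $\mu^2(h)$ to appear as a prefactor, and combining the surviving local factors with $\prod_p(1-1/p^2)=6/\pi^2$ rearranges into the advertised
$$\frac{6}{\pi^2}\,\mu^2(h)\prod_{p\mid q'}\!\left(1-\frac{1}{p}\right)^{\!-1}\prod_{p\mid q}\!\left(1+\frac{1}{p}\right)^{\!-1}.$$

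The main obstacle is the tail bound $S_{>D}:=\sum_{D<d\leq\sqrt{x}}\mu(d)N_d$. Following Hooley's Dirichlet-hyperbola device, I write $n=d^2 m$, so $d>D$ forces $m<x/D^2=q$, and reverse the order of summation. For each $m$ with $g:=\gcd(m,q)\mid h$, the congruence $d^2m\equiv a\pmod q$ reduces to $d^2\equiv (a/g)(m/g)^{-1}\pmod{q/g}$, whose solutions lie in at most $2^{\omega(q/g)}\ll q^\varepsilon$ residue classes modulo $q/g$ by the standard divisor-function bound on square roots in composite moduli. Combining this with the range $d\in(D,\sqrt{x/m}\,]$ and summing carefully over $m<q$ and $g\mid h$, with $\sum_{m'\leq M}m'^{-1/2}\ll M^{1/2}$ used to convert the $d$-interval lengths, produces
$$|S_{>D}|\ll h\bigl((x/q)^{1/2}+q^{1/2+\varepsilon}\bigr),$$
which combined with the main-term error completes the proof. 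The crux is the bookkeeping in this tail estimate: balancing the $q^\varepsilon$ loss from square-root counting against the length of the $d$-interval, and tracking the compatibility factor $h$ through the summation over $g\mid h$, is where the bulk of the work lies.
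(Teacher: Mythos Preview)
Your approach is genuinely different from the paper's. The paper does \emph{not} redo Hooley's argument; instead it performs a reduction to the coprime case and then invokes Hooley's Theorem~3 as a black box. Concretely, writing $h=\gcd(a,q)$ and $n=hn'$, $a=ha'$, $q=hq'$, the paper observes $\mu^2(hn')=\mu^2(h)\mu^2(n')$ when $\gcd(h,n')=1$ (and $0$ otherwise), encodes the extra coprimality via a sum over residues modulo $\tilde h=\prod_{p\mid h,\,p\nmid q'}p$, and applies Hooley to each of the resulting $\varphi(\tilde h)$ progressions modulo $q'\tilde h$ with coprime residue. The factor $h$ in the error term then emerges from the bound $\varphi(\tilde h)\leq\tilde h\leq h$ after summing. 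This is shorter and avoids revisiting the delicate tail estimate. Your route---carrying Hooley's hyperbola argument through directly in the non-coprime setting---is more self-contained but requires reproving the hard part.

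Two points where your sketch is imprecise. First, in the completion step you invoke $\gcd(q,d^2)\leq q$, but that only gives $\tfrac{y}{q}\cdot q\sum_{d>D}d^{-2}\ll y/D=y\sqrt{q/x}$, which is not $O((x/q)^{1/2})$. The compatibility constraint actually forces $\gcd(q,d^2)\mid h$, so $\gcd(q,d^2)\leq h$, and then the tail of the series is $\ll\tfrac{yh}{qD}\leq h(x/q)^{1/2}$, matching the stated error. Second, in your $S_{>D}$ bound, when $g=\gcd(m,q)\mid h$ but $g<h$ the reduced congruence $d^2\equiv(a/g)(m/g)^{-1}\pmod{q/g}$ has $\gcd(a/g,q/g)=h/g>1$, so the square-root count is no longer the clean $O(q^{\varepsilon})$ you quote and the bookkeeping over $g\mid h$ becomes more involved than your sketch suggests. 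This is exactly the complication the paper sidesteps by reducing to $\gcd=1$ first; if you pursue your direct route you will need to handle these degenerate congruences carefully to recover the factor $h$ (rather than something larger) in front of $q^{1/2+\varepsilon}$.
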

We can rewrite the right-hand side above in terms of the solutions of the polynomial $f~\bmod$ $p^2$. Therefore, we get
$$
\frac{6\mu^2(h)y}{\pi^2q}\prod_{p\mid q'} \left(1-\frac{1}{p}\right)^{-1}\prod\limits_{p\mid q}\left(1+\frac{1}{p}\right)^{-1}=\frac{y}{q}
\prod_{p}\left(1-\frac{\rho_f(p^2)}{p^2}\right),
$$ 
where $f\left(n\right)=qn+a$ and $\rho_f(m)$ is defined to be the number of solutions of a polynomial $f(n)\bmod m$ for $1\leq n\leq m$, hence it agrees with the conjectured asymptotic behaviour.
\begin{proof}
	We want to estimate
	\begin{equation*}
		S:=\sum_{\substack{x-y\leq n \leq x\\n\equiv a\bmod q}} \mu^2\left(n\right).
	\end{equation*}
	Our aim is to reduce the problem to the case where $\gcd(a,q)=1$ and then apply a theorem by Hooley.\\ 
	Let $h=\gcd(a,q)$ and $n'h=n$, $a'h=a$ and $q'h=q$, hence $\gcd(a',q')=1$. We have
	\begin{equation*}
		S =\sum_{\substack{x-y\leq n \leq x\\n\equiv a\bmod q}} \mu^2\left(n\right) = \sum_{\substack{\frac{x-y}{h}\leq n' \leq \frac{x}{h}\\n'\equiv a'\bmod q'}} \mu^2\left(hn'\right).
	\end{equation*}
	If $\gcd(h,n')\neq 1$, we have $\mu^2(hn')=0$. Therefore, we can impose that $\gcd(h,n')=1$. We get that
	\begin{align*}
		S&= \sum_{\substack{\frac{x-y}{h}\leq n' \leq \frac{x}{h}\\n'\equiv a'\bmod q'\\\gcd(h,n')=1}} \mu^2(h)\mu^2(n')\\
		&= \sum_{\substack{\frac{x-y}{h}\leq n' \leq \frac{x}{h}\\n'\equiv a'\bmod q'\\\gcd(\tilde{h},n')=1}} \mu^2(h)\mu^2(n'),
	\end{align*}
	where $$\tilde{h}=\prod_{\substack{p\mid h\\p\nmid q'}}p,$$
	since the congruence condition already ensures that $\gcd(n',q')=1$.\\
	Now we rewrite $\gcd(\tilde{h},n')$ as a sum of congruences and apply the Chinese Remainder Theorem to the inner sum, since $\gcd(\tilde{h},q')=1$ by the construction of $\tilde{h}$. Therefore,
	\begin{align*}
		S&=\mu^2(h) \sum_{\substack{1\leq b\leq\tilde{h}\\\gcd(b,\tilde{h})=1}} \sum_{\substack{\frac{x-y}{h}\leq n' \leq \frac{x}{h}\\n'\equiv a'\bmod q'\\n'\equiv b\bmod \tilde{h}}} \mu^2(n')\\
		&=\mu^2(h) \sum_{\substack{1\leq b\leq\tilde{h}\\\gcd(b,\tilde{h})=1}} \sum_{\substack{\frac{x-y}{h}\leq n' \leq \frac{x}{h}\\n'\equiv \tilde{a}\bmod q'\tilde{h}}} \mu^2(n'),
	\end{align*}
	where $\tilde{a}$ is the unique solution mod $q'\tilde{h}$ to $\tilde{a}\equiv a'\bmod q'$ and $\tilde{a}\equiv b\bmod \tilde{h}$.\\
	The inner sum now has a form where we can apply Theorem 3 from Hooley \cite{hooley_1975}. Also, we notice that the remaining sum is defined to be $\varphi(\tilde{h})$. This gives that
	\begin{align*}
		S&=\mu^2(h)\frac{6}{\pi^2}\frac{y}{hq'\tilde{h}}
		\prod_{p\mid q'\tilde{h}} \left(1-\frac{1}{p^2}\right)^{-1}\sum_{\substack{1\leq b\leq\tilde{h}\\\gcd(b,\tilde{h})=1}}1 + O\left(\varphi(\tilde{h})\left(\left(\frac{x}{q'\tilde{h}}\right)^{1/2}+(q'\tilde{h})^{1/2+\varepsilon}\right)\right)\\
		&=\mu^2(h)\frac{6}{\pi^2}\frac{y}{hq'}\frac{\varphi(\tilde{h})}{\tilde{h}}\prod_{p\mid q}\left(1-\frac{1}{p^2}\right)^{-1} + O\left(\tilde{h}\left(\left(\frac{xh/\tilde{h}}{q}\right)^{1/2}+q^{1/2+\varepsilon}\right)\right)\\
		&= \mu^2(h)\frac{6}{\pi^2}
		\frac{y}{q}\prod_{\substack{p\mid h\\p\nmid q'}} \left(1-\frac{1}{p}\right)\prod\limits_{p\mid q}\left(1-\frac{1}{p^2}\right)^{-1} + O\left(h\left(\left(\frac{x}{q}\right)^{1/2}+q^{1/2+\varepsilon}\right)\right)\\
		&= \frac{6}{\pi^2}\frac{\mu^2\left(h\right)y}{q}\prod_{p\mid q'} \left(1-\frac{1}{p}\right)^{-1}\prod\limits_{p\mid q}\left(1+\frac{1}{p}\right)^{-1} + O\left(h\left(\left(\frac{x}{q}\right)^{1/2}+q^{1/2+\varepsilon}\right)\right).
	\end{align*}
\end{proof}

\section{An analogue for $\mu^2(\cdot)$}\label{Chap_4}
This subsection aims to find an analogue for $\mu^2(\cdot)$.\\
We define
\begin{equation*}
	k_D(n):=\sum_{\substack{d^2\mid n \\ d\leq D}}\mu\left(d\right).
\end{equation*}
We will need to understand the values of $k_D(n)$ in arithmetic progression for short intervals to verify assumption $(2)$ of Corollary \ref{cor_general}. Further, we also need to understand these values for general polynomials. The first one will be calculated via explicit manipulations to arrive at the expression given by the asymptotics of $\mu^2\left(n\right)$ in arithmetic progressions in short intervals. The latter one will be derived via a general approach for the first $x$ values of an arbitrary polynomial whose coefficients have no common divisors.
\subsection{Preliminary}
Before proving any statements regarding $k_D(\cdot)$, we will investigate a bound on the average number of solutions$~\bmod$ $d^2$, which is stated and proven in \cite{shparlinski_average_2013}. We will repeat the proof along the same lines.\\
\begin{lemma}[Shparlinski]\label{lm_Igor}
	Let f be a square-free polynomial such that the coefficients have no common factor. Then, if $D\leq H^{A}$ for some A, the following two statements hold:
	$$\sum_{\substack{d\leq D\\\mu^2(d)=1}}\rho_f(d^2)=O(DH^{\varepsilon});$$
	$$\sum_{\substack{d>D\\\mu^2(d)=1}}\frac{\rho_f(d^2)}{d^2}=O(D^{-1}H^{\varepsilon}).$$
\end{lemma}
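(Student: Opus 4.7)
The plan is to exploit the multiplicativity of $\rho_f$ on squarefree arguments (via the Chinese Remainder Theorem) and then split the prime factors of $m$ according to whether Hensel's lemma applies. Set $\Delta := a_{\deg f}\cdot \disc(f)$, the product of the leading coefficient of $f$ and its discriminant. Since the coefficients of $f$ are bounded by $H$ and $\deg f \leq d$, we have $|\Delta|\ll H^{O_d(1)}$, so the set $P$ of prime divisors of $\Delta$ satisfies $|P|\ll \log H/\log\log H$. This already gives $(1+\deg f)^{|P|}\ll H^{o(1)}\leq H^\varepsilon$ for $H$ sufficiently large in terms of $A$ and $d$.

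For $p\notin P$ the polynomial $f$ has nonzero leading term and no repeated root modulo $p$, so Hensel's lemma yields $\rho_f(p^2)=\rho_f(p)\leq \deg f$. For $p\in P$ we use only the trivial bound $\rho_f(p^2)\leq p\cdot\rho_f(p)\leq p\deg f$, which follows by reducing mod $p$ and noting that each of the at most $\deg f$ solutions has at most $p$ lifts. By CRT every squarefree $m$ factors uniquely as $m=m_1 m_2$ with $m_1\mid \rad(\Delta)$ and $\gcd(m_2,\Delta)=1$, and thus
\begin{equation*}
\rho_f(m^2)=\rho_f(m_1^2)\rho_f(m_2^2)\leq m_1(\deg f)^{\omega(m)}.
\end{equation*}

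For the first claim, interchanging the order of summation and applying this bound gives
\begin{equation*}
\sum_{\substack{m\leq D\\ \mu^2(m)=1}}\rho_f(m^2)\leq \sum_{m_1\mid\rad(\Delta)}m_1(\deg f)^{\omega(m_1)}\sum_{\substack{m_2\leq D/m_1\\ \gcd(m_2,\Delta)=1,\,\mu^2(m_2)=1}}(\deg f)^{\omega(m_2)}.
\end{equation*}
The inner sum is $\ll (D/m_1)(\log D)^{\deg f-1}$ by the standard Wirsing-type mean-value estimate for the multiplicative function $(\deg f)^{\omega(\cdot)}\mu^2(\cdot)$. After cancellation of $m_1$ and using $\sum_{m_1\mid\rad(\Delta)}(\deg f)^{\omega(m_1)}\leq (1+\deg f)^{|P|}\ll H^{o(1)}$, the whole expression is $\ll D(\log D)^{\deg f-1}H^{o(1)}\ll DH^\varepsilon$, since the hypothesis $D\leq H^A$ absorbs the logarithmic factor into $H^\varepsilon$.

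For the second claim we apply the same decomposition with the tail bound $\sum_{m_2>Y,\,\mu^2(m_2)=1,\,\gcd(m_2,\Delta)=1}(\deg f)^{\omega(m_2)}/m_2^2\ll Y^{-1}(\log Y)^{\deg f-1}$ obtained by partial summation from the Wirsing estimate. Substituting $Y=D/m_1$ and combining with $\rho_f(m_1^2)/m_1^2\leq (\deg f)^{\omega(m_1)}/m_1$ cancels the $m_1$ factors, and the same outer-sum bound gives $\ll D^{-1}(\log D)^{\deg f-1}H^{o(1)}\ll D^{-1}H^\varepsilon$. The main obstacle is ensuring that the contribution from the ``bad'' primes $P$ remains negligible; this reduces to the elementary bound $|P|\ll \log H/\log\log H$, which, together with $\deg f$ being a fixed constant, makes $(1+\deg f)^{|P|}$ subpolynomial in $H$ and hence absorbable into $H^\varepsilon$. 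Everything else is standard multiplicative-function machinery.
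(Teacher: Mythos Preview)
Your proof is correct and follows essentially the same strategy as the paper: bound $\rho_f(p^2)\le \deg f$ via Hensel for primes not dividing (leading coefficient)$\cdot\disc(f)$ and $\rho_f(p^2)\le p\deg f$ trivially otherwise, then exploit multiplicativity on squarefree moduli. The only difference is in the final summation: where you invoke a Wirsing-type mean value for $(\deg f)^{\omega(\cdot)}$ and then control the bad part via $(1+\deg f)^{|P|}\ll H^{o(1)}$, the paper takes the cruder pointwise bound $(\deg f)^{\omega(d)}\ll d^{\varepsilon}$ and reduces everything to the elementary estimate $\sum_{d\le D}\gcd(d,\Delta_f)\le \sum_{e\mid \Delta_f} e\lfloor D/e\rfloor\le D\,\tau(\Delta_f)\ll D\,H^{\varepsilon}$ (and the analogous tail version), which is slightly quicker and avoids any appeal to mean-value theorems.
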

\begin{proof}
	Let $d_f$ be the degree of the polynomial $f$. We clearly have $\rho_f(p)\leq d_f$, hence by Hensel lifting, we have
	$$
	\rho_f(p^2)\leq d_f,
	$$
	if $p$ does not divide the discriminant $\Delta_f$ of $f$, which is non-zero since $f$ is square-free. Also, we have $\Delta_f=H^{O(1)}$. If $p\mid \Delta_f$, we have the trivial bound $\rho_f(p^2)\leq d_fp$.\\
	Since $\rho_f$ is multiplicative, we have for square-free $d$ that
	$$
	\rho_f(d^2)=\prod_{p\mid d}\rho_f(p^2)=d_f^{\omega(d)}\gcd(d,\Delta_f),
	$$
	where $\omega(d)$ is the number of prime divisors of $d$. By a standard bound of $\omega(d)$ we get
	$$
	\rho_f(d^2)\leq d_f^{\varepsilon}\gcd(d,\Delta_f).
	$$
	Now we have
	$$
	\sum_{d\leq D}\gcd(d,\Delta_f) \leq\sum_{e\mid\Delta_f}e\sum_{\substack{d\leq D\\ e\mid d}}1\leq D\tau(\Delta_f)=D\Delta_f^{\varepsilon}.
	$$
	This gives the first estimate
	$$
	\sum_{\substack{d\leq D\\\mu^2(d)=1}}\rho_f(d^2)
	=D^{\varepsilon}\sum_{d\leq D}\gcd(d,\Delta_f)=O(DH^{\varepsilon}).
	$$
	Further, we have
	\begin{align*}
		\sum_{d>D}\frac{\gcd(d,\Delta_f)}{d^2} &\leq \sum_{e\mid\Delta_f}e\sum_{\substack{d>D\\e\mid d}}\frac{1}{d^2}\leq\sum_{e\mid\Delta_f}\frac{1}{e}\sum_{\substack{d>D\\e\mid d}}\frac{1}{(d/e)^2}\\
		&\leq\sum_{e\mid\Delta_f}\frac{1}{e}\min\{(e/D),1\}.
	\end{align*}
	Considering the two cases $e>D$ and $e\leq D$ separately, we see that each of the $\tau(\Delta_f)$ summands is $D^{-1+\varepsilon}$. Hence, using estimates for $\Delta_f$ and $\tau$, we get the desired result
	$$
	\sum_{d>D}\frac{\rho_f(d^2)}{d^2}=O(D^{-1}H^{\varepsilon}).
	$$
\end{proof}
\subsection{$k_D(\cdot)$ in short arithmetic progressions}
\begin{lemma}\label{sqfull2}
	Let $q\geq 1$, $a\geq0$ and let $h=h_1h_2=\gcd(a,q)$ be such that $h_1$ is square-free and $h_2$ is square-full. We assume further that each prime factor of $h_2$ is less than $D^2$. Then we have that
	\begin{equation}
		\sum_{\substack{x-y\leq n\leq x\\n\equiv a \bmod q}}k_D\left(n\right) = 
		\frac{6}{\pi^2}\frac{\mu^2\left(h\right)y}{q}\prod_{p\mid q'} \left(1-\frac{1}{p}\right)^{-1}\prod\limits_{p\mid q}\left(1+\frac{1}{p}\right)^{-1} + O\left(\frac{y}{q}\frac{\sqrt{h_2}}{D}H^{\varepsilon}+DH^{\varepsilon}\right).
	\end{equation}
\end{lemma}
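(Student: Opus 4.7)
The plan is to unfold the definition $k_D(n)=\sum_{d\leq D,\, d^2\mid n}\mu(d)$, swap the order of summation, count integers in the resulting joint arithmetic progression via the Chinese Remainder Theorem, and finally evaluate the main sum over $d$ as a (truncated) Euler product whose value matches the main term predicted by Proposition~\ref{prop-sqfree} for $\mu^2$.

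After swapping, one gets $\sum_{d\leq D}\mu(d)\#\{n\in[x-y,x]:n\equiv a\bmod q,\, d^2\mid n\}$. The joint congruences on $n$ are compatible iff $\gcd(q,d^2)\mid a$, equivalently $\gcd(q,d^2)\mid h$ (since $\gcd(q,d^2)\mid q$). When compatible, $n$ ranges over a single progression modulo $qd^2/\gcd(q,d^2)$, so the inner count is $y\gcd(q,d^2)/(qd^2)+O(1)$. Summing the $O(1)$ error over $d\leq D$ produces the $O(DH^\varepsilon)$ term of the lemma; the main contribution is $(y/q)S_0$ with
\[
S_0=\sum_{\substack{d\leq D\\ \gcd(q,d^2)\mid h}}\mu(d)\frac{\gcd(q,d^2)}{d^2}.
\]

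To evaluate $S_0$, I decompose any admissible square-free $d$ as $d=d_0d_1d_2$, where $d_0$ is supported on primes not dividing $q$, $d_1$ on primes $p$ with $v_p(q)=v_p(h)=1$ (so $p\mid h_1$), and $d_2$ on primes $p\mid h_2$. Under this decomposition $\gcd(q,d^2)/d^2=1/(d_0^2d_1)$. Extending the sum to all $d\geq 1$ yields an Euler product $\prod_p f(p)$ that vanishes exactly when $h_2>1$ (each prime of $h_2$ contributing a factor $1-1=0$, recovering the $\mu^2(h)$ factor) and otherwise, after rearrangement, equals $(6\mu^2(h)/\pi^2)\prod_{p\mid q'}(1-1/p)^{-1}\prod_{p\mid q}(1+1/p)^{-1}$, matching the asserted main term. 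For the truncation tail $d>D$, the inner sum $\sum_{d_0>D/(d_1d_2)}1/d_0^2\ll d_1d_2/D$, so the tail is $\ll \tau(\rad(h_1))\sigma(\rad(h_2))/D$; since $h_2$ is square-full, $\rad(h_2)\leq\sqrt{h_2}$, hence $\sigma(\rad(h_2))\ll\sqrt{h_2}\,H^\varepsilon$, giving a tail of size $\ll H^\varepsilon\sqrt{h_2}/D$ which, multiplied by $y/q$, produces the stated error. The hypothesis that every prime factor of $h_2$ lies below $D^2$ enters to guarantee that the truncation $d\leq D$ does not miss any discrepancy beyond this controlled tail.

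The main obstacle is the Euler-product identification: one must carefully partition primes into the categories $p\nmid q$, $p$ of $d_1$-type, $p\mid h_2$, and the residual $p\mid q$ not captured by the first three, then verify that $\prod_p f(p)$ collapses into exactly the expression in Proposition~\ref{prop-sqfree}, with the $6/\pi^2=\zeta(2)^{-1}$ prefactor emerging from extending the $p\nmid q$ factor to all primes.
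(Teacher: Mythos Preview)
Your proposal is correct and follows essentially the same route as the paper: swap the order of summation, resolve the joint congruence, decompose each admissible square-free $d$ into three pieces according to how its prime factors sit relative to $q$ and $h$ (your $d_0,d_1,d_2$ are exactly the paper's $d',k_1,\tilde{k_2}$ once the constraint $\gcd(k_1,q')=1$ is unpacked), extend to an Euler product, and bound the truncation tail via $\rad(h_2)\leq\sqrt{h_2}$. The only cosmetic difference is that the paper first divides through by $h$ to reduce to a coprime progression before decomposing, whereas you apply the Chinese Remainder Theorem directly to the original congruence; the resulting main term and both error terms coincide.
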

\begin{proof}
	As before, we start by exchanging summation. Thus, we have
	\begin{align*}
		S:&=\sum_{\substack{x-y\leq n\leq x\\n\equiv a \bmod q}}k_D\left(n\right)\\
		&=
		\sum_{\substack{x-y\leq n\leq x\\n\equiv a \bmod q}}\quad
		\sum_{\substack{d^2\mid n \\ d\leq D}}\mu\left(d\right)\\
		&=
		\sum_{d\leq D}\mu\left(d\right)
		\sum_{\substack{x-y\leq n\leq x\\n\equiv a \bmod q\\d^2\mid n}} 1.
	\end{align*}
	Let $h=\gcd(a,q)$. Then there exist $n',a',q'\in\NN$ such that $n'h=n$, $a'h=a$ and $q'h=q$, with $\gcd(a',q')=1$. We have
	\begin{align*}
		S&=
		\sum_{d\leq D}\mu\left(d\right)
		\sum_{\substack{\frac{x-y}{h}\leq n'\leq \frac{x}{h}\\n'\equiv a' \bmod q'\\d^2\mid hn'}} 1\\
		&=
		\sum_{k\mid h}
		\sum_{\substack{d\leq D\\\gcd(d^2,h)= k}}\mu(d) \sum_{\substack{\frac{x-y}{h}\leq n'\leq \frac{x}{h}\\n'\equiv a' \bmod q'\\\frac{d^2}{k}\mid n'}} 1.
	\end{align*}
	Let $h=h_1h_2$ and $k=k_1k_2$, where $h_1$ and $k_1$ are square-free, and $h_2$ and $k_2$ are square-full. The decomposition into a square-free and a square-full part gives us that $\gcd(h_1, h_2)=\gcd(k_1,k_2)=1$. Then we have by $\gcd(h_1, h_2)=1$ that 
	$$\gcd(d^2,h_1)\cdot\gcd(d^2,h_2)=\gcd(d^2,h_1h_2)=k_1k_2.$$
	In particular, we have that $\gcd(d^2,h_1)=k_1$, and $\gcd(d^2,h_2)=k_2$, since every prime divisor of the first factor has multiplicity 1, and every prime divisor of the second factor has multiplicity at least 2. Therefore, we have
	$$
	S=
	\sum_{k_2\mid h_2}
	\sum_{k_1\mid h_1}
	\sum_{\substack{d\leq D \\
			\gcd(d,h_1)=k_1 \\
			\gcd(d^2,h_2)=k_2}}\mu(d) \sum_{\substack{\frac{x-y}{h}\leq n'\leq \frac{x}{h}\\n'\equiv a' \bmod q'\\\frac{d^2}{k_1k_2}\mid n'}} 1.
	$$
	We can assume that $d$ is square-free since otherwise we have $\mu^2(d)=0$. Hence, $k_2$ is a perfect square since it contains no cubic factors, say $\left(\tilde{k_2}\right)^2=k_2$.\\
	We have $\gcd(d,h_2)=\tilde{k_2}$ and $\tilde{k_2}\mid\rad(h_2)$ since $d$ is square-free.
	Hence, we can rewrite the equation above as
	$$
	S=
	\sum_{\tilde{k_2}\mid \rad(h_2)}
	\sum_{k_1\mid h_1}
	\sum_{\substack{d\leq D \\\gcd(d,h_1)=k_1 \\\gcd(d,h_2)=\tilde{k_2}}}\mu(d) \sum_{\substack{\frac{x-y}{h}\leq n'\leq \frac{x}{h}\\n'\equiv a' \bmod q'\\\frac{d^2}{k_1\left(\tilde{k_2}\right)^2}\mid n'}} 1.
	$$
	In order for the congruence to have any solutions, we need $\gcd(\frac{d^2}{k_1\left(\tilde{k_2}\right)^2},q')=1$. Therefore, there exist $n'',a''\in\NN$ such that $n'=\frac{d^2}{k_1\left(\tilde{k_2}\right)^2}n''$ and $a'\equiv\frac{d^2}{k_1\left(\tilde{k_2}\right)^2}a''\bmod q'$. Then
	$$
	S=
	\sum_{\tilde{k_2}\mid \rad(h_2)}
	\sum_{k_1\mid h_1}
	\sum_{\substack{d\leq D\\\gcd(d,h_1)=k_1\\\gcd(d,h_2)= \tilde{k_2}\\\gcd(\frac{d^2}{k_1\left(\tilde{k_2}\right)^2},q')=1}}\mu(d) \sum_{\substack{\frac{x-y}{h}\frac{k_1\left(\tilde{k_2}\right)^2}{d^2}\leq n''\leq \frac{x}{h}\frac{k_1\left(\tilde{k_2}\right)^2}{d^2}\\n''\equiv a'' \bmod q'}} 1.
	$$
	Using the trivial estimate for the inner sum, we get $$\frac{y}{q'h}\frac{k_1\left(\tilde{k_2}\right)^2}{d^2} + O(1)=\frac{y}{q}\frac{k_1\left(\tilde{k_2}\right)^2}{d^2} + O(1).$$
	Therefore, we have
	$$
	S=
	\frac{y}{q}
	\sum_{\tilde{k_2}\mid \rad(h_2)}\left(\tilde{k_2}\right)^2
	\sum_{k_1\mid h_1}k_1
	\sum_{\substack{d\leq D\\\gcd(d,h_1)=k_1\\\gcd(d,h_2)= \tilde{k_2}\\\gcd(\frac{d^2}{k_1\left(\tilde{k_2}\right)^2},q')=1}}\frac{\mu(d)}{d^2} + O(DH^{\varepsilon}).
	$$
	Let $d'$ be such that $d=k_1\tilde{k_2}d'$. Then
	$$
	S=
	\frac{y}{q}
	\sum_{\tilde{k_2}\mid \rad(h_2)}\mu(\tilde{k_2})
	\sum_{k_1\mid h_1}\frac{\mu(k_1)}{k_1}
	\sum_{\substack{d'\leq \frac{D}{k_1\tilde{k_2}}\\\gcd(d',h)=1\\\gcd(d'k_1,q')=1}}\frac{\mu(d')}{\left(d'\right)^2} + O(DH^{\varepsilon}),
	$$
	since $k_1$, $\tilde{k_2}$ and $d'$ are pairwise coprime by the assumption that $d$ is square-free.\\
	Extending the inner sum to an infinite series, we get by the standard estimate an error term of
	$$
	O\left(\frac{y}{q}
	\sum_{\tilde{k_2}\mid \rad(h_2)}
	\sum_{k_1\mid h_1}\frac{1}{k_1}
	\sum_{\substack{d'> \frac{D}{k_1\tilde{k_2}}\\\gcd(d',h)=1\\\gcd(d'k_1,q')=1}}\frac{|\mu(d')|}{\left(d'\right)^2}\right)
	=O\left(\frac{y}{q}\frac{\rad(h_2)}{D}H^{\varepsilon}\right).
	$$
	Hence, the sum is
	$$
	S=\frac{y}{q}
	\sum_{\tilde{k_2}\mid \rad(h_2)}\mu(\tilde{k_2})
	\sum_{k_1\mid h_1}\frac{\mu(k_1)}{k_1}
	\sum_{\substack{d'\\\gcd(d',h)=1\\\gcd(d'k_1,q')=1}}\frac{\mu(d')}{\left(d'\right)^2} + O\left(\frac{y}{q}\frac{\rad(h_2)}{D}H^{\varepsilon}+DH^{\varepsilon}\right).
	$$
	Since $\gcd(k_1,d')=1$, we have
	$$
	S=\frac{y}{q}
	\sum_{\tilde{k_2}\mid \rad(h_2)}\mu(\tilde{k_2})
	\sum_{\substack{k_1\mid h_1\\\gcd(k_1,q')=1}}\frac{\mu(k_1)}{k_1}
	\sum_{\substack{d'\\\gcd(d',h)=1\\\gcd(d',q')=1}}\frac{\mu(d')}{\left(d'\right)^2} + O\left(\frac{y}{q}\frac{\sqrt{h_2}}{D}H^{\varepsilon}+DH^{\varepsilon}\right),
	$$
	using that $h_2$ is square-full. Since all three sums are now independent of each other, we can evaluate each of them separately.\\
	First, we obtain
	$$
	\sum_{\tilde{k_2}\mid \rad(h_2)}\mu(\tilde{k_2})=\mu^2(h)=\begin{cases}
		1 & \text{if } \rad(h_2)=1\\
		0 & \text{if } \rad(h_2)\neq1,
	\end{cases}
	$$
	by recalling the definition of $h_2$ and the fact that the sum is $1$ if $h_2=1$ and $0$ otherwise.\\
	The middle sum is
	$$
	\sum_{\substack{k_1\mid h_1\\\gcd(k_1,q')=1}}\frac{\mu(k_1)}{k_1} =
	\prod_{\substack{p\mid h_1\\p\nmid q'}}\left(1-\frac{1}{p}\right).
	$$
	The innermost sum is
	$$
	\sum_{\substack{d'\\\gcd(d',h)=1\\\gcd(d',q')=1}}\frac{\mu(d')}{\left(d'\right)^2} =
	\frac{6}{\pi^2}\prod_{p\mid q} \left(1-\frac{1}{p^2}\right)^{-1}.
	$$
	Combining the values of these three sums, we have
	$$
	S=\mu^2(h)\frac{6}{\pi^2}\frac{y}{q}\prod_{\substack{p\mid h_1\\p\nmid q'}}\left(1-\frac{1}{p}\right)
	\prod_{p\mid q} \left(1-\frac{1}{p^2}\right)^{-1} + O\left(\frac{y}{q}\frac{\sqrt{h_2}}{D}H^{\varepsilon}+DH^{\varepsilon}\right).
	$$
	In the case $\mu^2(h)=1$, this is the desired result since $h$ is square-free in this case, i.e. $h_1=h$.\\
\end{proof}

\subsection{Summing $k_D(\cdot)$ over polynomial values}
First, let us recall $\rho_f\left(m\right)$. It is defined to be the number of solutions to $f\left(n\right)\equiv 0 \bmod m$, where $1\leq n\leq m$. Furthermore, we denote by $H$ the upper bound for the size of the coefficients of $f$. Additionally, we assume that the $\gcd$ over all the coefficients is 1 and that $f$ is of degree at most $d$.\\
\begin{lemma}\label{lem_functions}
	Using the assumptions and notation above, if f is square-free, we have
	\begin{equation}
		\sum_{n\leq x} k_D\left(f\left(n\right)\right) =x\prod_{p}\left(1-\frac{\rho_f\left(p^2\right)}{p^2}\right) + O\left(DH^{\varepsilon}+\frac{x}{D}H^{\varepsilon}\right).
	\end{equation}
	If f is not square-free, then we have
	\begin{equation}
		\sum_{n\leq x} k_D\left(f\left(n\right)\right) = O\left(x^{1+\varepsilon}H^{\varepsilon}+D\right).
	\end{equation}
\end{lemma}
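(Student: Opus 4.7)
The plan is to treat the two cases separately, both starting from the Möbius swap
\begin{equation*}
\sum_{n\leq x} k_D(f(n)) = \sum_{d\leq D}\mu(d)\,\#\{n\leq x : d^2\mid f(n)\},
\end{equation*}
combined with the block-counting estimate $\#\{n\leq x : d^2\mid f(n)\} = x\rho_f(d^2)/d^2 + O(\rho_f(d^2))$, which holds uniformly in $d$.

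For part~(i), when $f$ is square-free, I would substitute this estimate, producing a main term $x\sum_{d\leq D}\mu(d)\rho_f(d^2)/d^2$ and an error $O\bigl(\sum_{d\leq D}\mu^2(d)\rho_f(d^2)\bigr)$, which the first statement of Lemma~\ref{lm_Igor} bounds by $O(DH^\varepsilon)$. I would then complete the main-term series to $d=\infty$; the multiplicativity of $\rho_f$ on square-free moduli turns the full series into the Euler product $\prod_p(1-\rho_f(p^2)/p^2)$, while the second statement of Lemma~\ref{lm_Igor} shows the tail is $x\sum_{d>D}\mu^2(d)\rho_f(d^2)/d^2 = O\bigl((x/D)H^\varepsilon\bigr)$, matching the second error term in the claim.

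For part~(ii), when $f$ is not square-free, this Euler product vanishes (since $\rho_f(p^2)\geq p$ for a positive density of primes, making $\sum_p\rho_f(p^2)/p^2$ diverge), so no main term is expected and only an absolute upper bound is required. For $n\leq x$ with $f(n)\neq 0$ I would use the trivial bound $|k_D(f(n))|\leq \#\{d\leq D : d^2\mid f(n)\}\leq \tau(|f(n)|)$, and the standard divisor-sum estimate for polynomial values delivers $\sum_{n\leq x}\tau(|f(n)|)\ll x^{1+\varepsilon}H^\varepsilon$. The at-most $d$ integers $n\leq x$ at which $f$ vanishes must be isolated, because $k_D(0)=\sum_{d\leq D}\mu(d)$ only admits the trivial bound $|k_D(0)|\leq D$; their combined contribution accounts for the additive $O(D)$ in the statement. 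I expect the only real obstacle to be the main-term analysis in part~(i), since the uniformity in both $D$ and $H$ of the tail bound rests entirely on the discriminant-sensitive estimates of Lemma~\ref{lm_Igor}; part~(ii) is essentially a divisor-sum estimate together with the separation of the zeros of $f$.
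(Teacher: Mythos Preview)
Your proposal is correct and follows essentially the same route as the paper: the same Möbius swap and block-counting estimate, the same appeal to both parts of Lemma~\ref{lm_Igor} for the error and tail in the square-free case, and the same split into $f(n)\neq 0$ (divisor bound) and $f(n)=0$ (at most $d$ zeros contributing $O(D)$) in the non-square-free case. Your parenthetical remark about the Euler product vanishing is extraneous motivation rather than part of the argument, but it does no harm.
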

\begin{proof}
	First, we assume that $f$ is square-free, i.e. its discriminant $\Delta_f$ is non-zero.\\
	Again, our first step in calculating the sum is to interchange the order of summation. Afterwards, we reformulate the divisibility condition in the sum to a congruence condition and use the definition and multiplicativity of $\rho_f\left(m\right)$ to get the following result:
	\begin{align*}
		\sum_{n\leq x} k_D\left(f\left(n\right)\right) &= \sum_{n\leq x} \sum_{\substack{d^2\mid f\left(n\right)\\d\leq D }} \mu\left(d\right)\\
		&= \sum_{d\leq D}\mu\left(d\right)\sum_{\substack{n\leq x\\d^2\mid f\left(n\right)}}1\\
		&= x\sum_{d\leq D}\frac{\mu\left(d\right)}{d^2}\rho_f\left(d^2\right) + O\left(DH^{\varepsilon}\right)\\
		&= x\prod_{p}\left(1-\frac{\rho_f\left(p^2\right)}{p^2}\right) + O\left(DH^{\varepsilon}+\frac{x}{D}H^{\varepsilon}\right).
	\end{align*}
	The error terms were first calculated in \cite{shparlinski_average_2013}, and can also be seen in Lemma \ref{lm_Igor} above.\\
	
	We now assume that $f$ is not square-free and obtain that 
	\begin{align*}
		\sum_{n\leq x}k_D(f(n)) &=\sum_{n\leq x}\sum_{\substack{d^2\mid f\left(n\right)\\d\leq D }}\mu(d)\\
		&\leq\sum_{\substack{n\leq x\\f(n)\neq 0}}\sum_{d\mid f(n)}1+\sum_{\substack{n\leq x\\f(n)= 0}}d\\
		&=O(x^{1+\varepsilon}H^{\varepsilon}+D).
	\end{align*}
	By the standard estimate for the divisor function and since $f$ can have at most $d$ roots, the proof is complete.
\end{proof}

\section{Proof of Theorem \ref{thm_aim}}\label{Chap_5}
\begin{proof}[Proof of Theorem \ref{thm_aim}]
	We want to use Corollary \ref{cor_general}.
	We need to show that all the necessary conditions for Corollary \ref{cor_general} hold with our choices of $F\left(n\right)=\mu^2\left(n\right)-k_D\left(n\right)$ and $\delta=2/5+\varepsilon$. (The latter is an arbitrary choice, and one can take $\delta$ to be any value as long as $\varepsilon<\delta<1/2$.)\\
	To verify assumption $(1)$ of Corollary \ref{cor_general}, let us compare $F(n)$ to the divisor function. If $n=1$, we have $F(1)=0$. For $n>1$, we have
	\begin{equation}
		\left|k_D\left(n\right)\right| =
		\left|\sum_{\substack{d^2\mid n \\ d\leq D}}\mu\left(d\right)\right|
		\leq \sum_{d\mid n}\left|\mu\left(d\right)\right|
		=\tau\left(n\right)\ll n^{\varepsilon}
	\end{equation}
	by the standard estimate of the divisor function. Hence, also $F(n)\ll n^{\varepsilon}$.\\
	Now we check assumption $(2)$. By Proposition \ref{prop-sqfree} and Lemma \ref{sqfull2} we get
	\begin{align*}
		\sum_{\substack{n\in I\\n\equiv u\bmod q}}F(n) &= 
		\sum_{\substack{n\in I\\n\equiv u\bmod q}}\left(\mu^2(n)-k_D(n)\right)\\
		&=
		\frac{6}{\pi^2}\frac{\mu^2\left(h\right)|I|}{q}
		\prod_{p\mid q'} \left(1-\frac{1}{p}\right)^{-1}
		\prod\limits_{p\mid q} \left(1+\frac{1}{p}\right)^{-1} + O\left(h\left(\left(\frac{x^dH}{q}\right)^{1/2}+q^{1/2+\varepsilon}\right)\right)\\
		&-\frac{6}{\pi^2} \frac{\mu^2\left(h\right)|I|}{q}\prod_{p\mid q'} \left(1-\frac{1}{p}\right)^{-1}
		\prod\limits_{p\mid q} \left(1+\frac{1}{p}\right)^{-1} + O\left(\frac{|I|}{q}\frac{\sqrt{h_2}}{D}H^{\varepsilon}+DH^{\varepsilon}\right)\\
		&= O\left(h\left(\frac{Hx^d}{q}\right)^{1/2}+\frac{|I|}{q}\frac{\sqrt{h_2}}{D}H^{\varepsilon}+DH^{\varepsilon}\right),
	\end{align*}
	by recalling the assumption on $q$ and $H$, i.e. $q^2<Hx^d$.\\
	Plugging this into the expression in assumption $(2)$, we get
	\begin{align*}
		&\max_{\substack{1\leq u\leq q\\
				\gcd(u,q)\leq x
		}}  
		\sup_{\substack{
				I  \textnormal{ interval } 
				\\
				|I|> H^{1-\varepsilon}x^k
				\\
				I\subset [-2H x^d, 2H x^d]
		}}
		\frac{q}{|I|}
		\left|\sum_{\substack{n\in I\\n\equiv u\bmod q}}F(n)\right|\\
		\ll &\max_{\substack{
				1\leq u\leq q\\
				\gcd(u,q)\leq x
		}}   
		\sup_{\substack{
				I  \textnormal{ interval } 
				\\
				|I|> H^{1-\varepsilon}h^k
				\\
				I\subset [-2H x^d, 2H x^d]
		}}
		\frac{q}{|I|} \left(h(u)\left(\frac{Hx^d}{q}\right)^{1/2}+\frac{|I|}{q}\frac{\sqrt{h_2(u)}}{D}H^{\varepsilon}+DH^{\varepsilon}\right)\\
		\ll &\max_{\substack{
				1\leq u\leq q\\
				\gcd(u,q)\leq x
		}} \left(\frac{h(u)x^{\frac{d+\ell}{2}}}{H^{\frac{1}{2}}x^k}+\frac{\sqrt{h_2(u)}}{D}+\frac{Dx^{\ell}}{Hx^k}\right)H^{\varepsilon}.
	\end{align*}
	To finish the proof that condition $(2)$ holds, we need to verify that the term above is $\ll x^{-2/5+\varepsilon}$.\\
	The first summand is $\ll x^{-2/5+\varepsilon}$ by the assumptions that $x^{d+l-2k+14/5+\varepsilon}\leq H$ and $\gcd(u,q)=h\leq x$.\\
	By the assumption $\gcd(u,q)\leq x$, we see that $\sqrt{h_2}\leq\sqrt{x}$, hence, by choosing $D=x^{9/10+\varepsilon}$, we get $\frac{h_2}{D}H^{\varepsilon}\ll x^{-2/5+\varepsilon}$.\\
	The last summand is $\ll x^{-2/5+\varepsilon}$ by the definitions of $H$ and $D$.
	Hence, we conclude that both assumptions are verified.\\
	Therefore, we get the following:
	\begin{align*}
		\sup_{x'\in [x/2,x]}\sum_{|a|,|b|\leq H}\left|\sum_{1\leq n\leq x'} \mu^2(an^k+bn^{\ell}+g(n)) - k_D(an^k+bn^{\ell}+g(n))\right|^2\ll H^2x^{2-\frac{2-\varepsilon}{5+5\ell+5d(\ell-k)}},
	\end{align*}
	since under the choices $k<\ell\leq d$ and $\delta=2/5-\varepsilon$, we have that
	$$\min\left(\frac{2/5-\varepsilon}{1+\ell+d(\ell-k)} ,\frac{1-\varepsilon}{1+3\ell-2k+d}\right)=\frac{2/5-\varepsilon}{1+\ell+d(\ell-k)}.
	$$
	By Lemma \ref{lem_functions}, we have for square-free polynomials that
	$$
	\left|\sum_{1\leq n\leq x}k_D(f(n))-c_fx\right|\ll DH^{\varepsilon}+\frac{x}{D}H^{\varepsilon}.
	$$
	We denote by $\mathcal{G}_g^{\#}(H)$ the subset of $\mathcal{G}_g(H)$ that contains only the square-free polynomials.
	Therefore, we can bound the contribution of square-free polynomials to be at most
	\begin{align*}
		&\sup_{x'\in [x/2,x]} \sum_{f\in\mathcal{G}_g^{\#}(H)}\left|\sum_{1\leq n\leq x'} \mu^2(f(n)) - c_fx'\right|^2\\
		\ll
		&\sup_{x'\in [x/2,x]}\sum_{f\in\mathcal{G}_g^{\#}(H)} \left|\sum_{1\leq n\leq x'} \mu^2(f(n)) - k_D(f(n))+k_D(f(n))- c_fx'\right|^2\\		
		\ll &\sup_{x'\in [x/2,x]}\sum_{f\in\mathcal{G}_g^{\#}(H)} \left|\sum_{1\leq n\leq x'} \mu^2(f(n)) - k_D(f(n))\right|^2 + 
		\sup_{x'\in [x/2,x]}\sum_{f\in\mathcal{G}_g^{\#}(H)}\left|\sum_{1\leq n\leq x'}k_D(f(n))- c_fx'\right|^2\\
		\ll &H^2x^{2-\frac{2-\varepsilon}{5+5\ell+5d(\ell-k)}},
	\end{align*}
	by recalling that $D=x^{9/10+\varepsilon}$.\\
	If we can show that there are at most $O(H)$ non-square-free polynomials in $\mathcal{G}_g(H)$, then we can extend the sum above to the whole set $\mathcal{G}_g(H)$.\\
	We consider the $O(H)$ polynomials $g(n,a)=g(n)+an^{\ell}=c_e(a)n^e+\ldots+c_0(a)$, and we can assume that $c_e(a)\neq0$ and $c_0(a)\neq0$ by the assumption that $g(0)\neq0$ and $\ell\geq1$. Also we note that $e\leq d$.
	We now use that a polynomial $f$ is not square-free if and only if $\Delta_f=0$, where $\Delta_f$ is defined as $\Delta_f=\frac{(-1)^{(n-1)n/2}}{c_d}$Res$(f,f')$ and Res$(f,f')$ is the resultant of $f$ and its derivative $f'$.\\
	Viewing the resultant as a polynomial in the $c_i$'s, we see that the coefficient of $c_e^{e-1}c_0^{e-1}$ is $\pm(e)^{e}$, and the coefficient of $c_i^{e}c_e^{e-1-i}c_0^{i-1}$ is $\pm(i)^i(e-i)^{e-i}$.\\
	Now for each of the $O(H)$ many $b$'s, we consider the discriminant of the polynomials as a function in the coefficient of $n^k$. If $k\geq e$, there is at most one polynomial with leading coefficient $0$, hence were are done in this case. Therefore, we can assume from now on that the leading coefficient is non-zero.\\
	We know from above that the resultant this polynomial is not constantly zero, hence has only at most $2d-2$ many solutions. Therefore, in total, there are at most $O(H)$ non-square-free polynomials, each contributing at most $O(x^{1+\varepsilon}+D)=O(x^{1+\varepsilon})$. Therefore, the contribution of all non-square-free polynomials together is negligible.\\
	Hence,
	$$
	\sup_{x'\in [x/2,x]} \sum_{f\in\mathcal{G}_g(H)}\left|\sum_{1\leq n\leq x'} \mu^2(f(n)) - c_fx'\right|^2
	\ll H^2x^{2-\frac{2-\varepsilon}{5+5\ell+5d(\ell-k)}}.
	$$
	Now we want to find an upper bound for the number of polynomials that fail to satisfy the predicted asymptotic behaviour. We achieve this by estimating the following set:
	$$
	E_{\eta}(x,H):=\#\left\{f\in\mathcal{G}_g(H):\left|\sum_{1\leq n\leq x}\mu^2(f(n))-c_fx\right|>\eta x\right\}.
	$$
	By equation \eqref{equ2}, we have
	\begin{align*}
		E_{\eta}(x,H)\leq\frac{1}{\eta^2x^2}\sum_{f\in\mathcal{G}_g(H)}\left|\sum_{1\leq n\leq x}\mu^2(f(n))-c_fx\right|^2\ll \frac{H^2}{\eta^2x^{\frac{2-\varepsilon}{5+5\ell+d(\ell-k)}}}.
	\end{align*}
	We take $\eta=x^{-\frac{1-\varepsilon}{10+10\ell+10d(\ell-k)}}$ and recall that $x\leq H^{(1-\varepsilon)/(d+l+14/5)}$ to complete the proof.
\end{proof}
\printbibliography
\end{document}